\theoremstyle{plain}
\newtheorem{theorem}{Theorem}[section]
\newtheorem{lemma}[theorem]{Lemma}
\newtheorem{corollary}[theorem]{Corollary}
\theoremstyle{definition}
\newtheorem{definition}[theorem]{Definition}
\theoremstyle{remark}
\newtheorem{remark}[theorem]{Remark}
\numberwithin{equation}{section} 
\DeclareMathOperator{\divv}{div}
\DeclareMathOperator*{\supp}{supp}
\title{Weak-strong uniqueness principle\\for compressible barotropic self-gravitating fluids}
\author{Danica Basari\'{c} \thanks{The work of the author was funded from the Czech Science Foundation (GA\v{C}R), Grant Agreement 21-02411S. The institute of Mathematics of the Czech Academy of Sciences is supported by RVO:67985840.}}
\date{}
\begin{document}

\maketitle

\begin{center}
	Institute of Mathematics of the Czech Academy of Sciences \\
	\v{Z}itn\'{a} 25, 115 67 Praha 1, Czech Republic
\end{center}

\begin{center}
	E-mail address: basaric@math.cas.cz
\end{center}

\begin{abstract}
	The aim of this work is to prove the weak--strong uniqueness principle for the compressible Navier--Stokes--Poisson system on an exterior domain, with an isentropic pressure of the type $p(\varrho)=a\varrho^{\gamma}$ and allowing the density to be close or equal to zero. In particular, the result will be first obtained for an adiabatic exponent $\gamma \in [9/5,2]$ and afterwards, this range will be slightly enlarged via pressure estimates ``up to the boundary", deduced relaying on boundedness of a proper singular integral operator. 
\end{abstract}

\textbf{Mathematics Subject Classification:} 35J05, 35L65, 76N06

\vspace{0.2 cm}

\textbf{Keywords:} gaseous star; Navier--Stokes--Poisson system; weak-strong uniqueness principle

\section{Introduction}

In this paper we consider the compressible Navier--Stokes--Poisson system, characterized by the following equations:
\begin{equation} \label{the Navier-Stokes-Poisson system}
\begin{aligned}
\partial_t \varrho + \divv_x (\varrho \textbf{u}) &=0, \\
\partial_t(\varrho \textbf{u}) + \divv_x(\varrho \textbf{u}\otimes \textbf{u})+ \nabla_x p(\varrho) &= \divv_x \mathbb{S}(\nabla_x \textbf{u}) + G\varrho \nabla_x \Phi, \\
\sigma\Delta_x \Phi &= \varrho+g.
\end{aligned}
\end{equation}
The system will be studied $(0,T) \times \Omega$, where $T>0$ can be chosen arbitrarily large and $\Omega \subseteq \mathbb{R}^3$ is a bounded or unbounded domain. Here, the unknown variables are the density $\varrho=\varrho(t,x)$, the velocity $\textbf{u}=\textbf{u}(t,x)$ and the potential $\Phi=\Phi(t,x)$, while $p=p(\varrho)$ represents the barotropic pressure, $\mathbb{S}=\mathbb{S}(\nabla_x \textbf{u})$ the viscous stress tensor, which we suppose to be a linear function of the velocity gradient, $G$ is a positive constant and $g=g(x)$ a given function; further details can be found in Section \eqref{The system}.

Depending on the choice of $\sigma = \pm 1$, system \eqref{the Navier-Stokes-Poisson system} models two different physical phenomena:
\begin{itemize}
	\item for $\sigma=1$, it describes the transportation of charged particles in electronic devices and $(\varrho, \textbf{u}, \Phi)$ represent the density, velocity and electrostatic potential of the charge, respectively (see \cite{Deg} for more details);
	\item for $\sigma=-1$, it describes the motion of a gaseous star and $(\varrho, \textbf{u}, \Phi)$ represent the density, velocity and gravitational potential of the star, respectively.
\end{itemize}

In view of its importance in many real world problems, the Navier-Stokes-Poisson system \eqref{the Navier-Stokes-Poisson system} is a matter of great interest in mathematics and physics. Unfortunately, well-posedness of strong solutions was achieved only on a small time interval and for initial data satisfying some compatibility conditions, see for instance the work of Tan and Zhang \cite{TanZha}. On the other hand, something more can be said if we turn our attention to the class of weak solutions. For $\sigma=1$, the existence of global-in-time weak solutions was established on a bounded domain $\Omega$ and for a barotropic pressure of the type $p(\varrho) = a\varrho^{\gamma}$ by Donatelli \cite{Don} with the adiabatic exponent $\gamma \geq 3$, and by Kobayashi and Suzuki \cite{KobSuz} for $\gamma > \frac{3}{2}$, while on the whole space $\Omega=\mathbb{R}^3$ it was proved by Li, Matsumura and Zhang \cite{LiMatZha}. For $\sigma=-1$, the existence of global-in-time weak solutions was proved on an exterior domain $\Omega$ and with a barotropic pressure of the type $p(\varrho) = a\varrho^{\gamma}$, $\gamma>\frac{3}{2}$, by Ducomet and Feireisl \cite{DocFei}; later on, this result was improved for a non-monotone pressure by Ducomet, Feireisl, Petzeltov\'{a} and Stra\v{s}kraba \cite{DocFeiPetStr}. 

In this context, a bridge between the classes of strong and weak solutions can be constructed by means of an important analytical tool known as \textit{weak-strong uniqueness principle}: a weak solution of problem \eqref{the Navier-Stokes-Poisson system} coincides with the strong one, emanating from the same initial data, as long as the latter exists. The rather standard procedure in order to prove it is to introduce a positive functional measuring the ``distance" between the weak and strong solutions and to show that it vanishes for any time as a consequence of Gronwall's Lemma. The functional in question is known as \textit{relative energy} since it can be seen as a generalization of the mechanical energy associated to the system. However, the choice of $\sigma \in \{ 1,-1\}$ in the third equation of the Navier-Stokes-Poisson system \eqref{the Navier-Stokes-Poisson system} plays a key role in making the whole problem easier or more difficult, respectively. Indeed, notice that multiplying the second equation of system \eqref{the Navier-Stokes-Poisson system} by $\textbf{u}$, integrating over $(0,T) \times \Omega$ and imposing suitable boundary conditions for $\textbf{u}$ and $\nabla_x \Phi$, we can recover the energy inequality associated to the system:
\begin{equation*}
\left[\int_{\Omega} \left(\frac{1}{2} \varrho |\textbf{u}|^2 +P(\varrho)+ \frac{\sigma}{2}\ G\  |\nabla_x\Phi|^2  \right)(t,\cdot) \textup{d}x \right]_{t=0}^{t=\tau} + \int_{0}^{\tau} \int_{\Omega} \mathbb{S}(\nabla_x \textbf{u}):\nabla_x \textbf{u} \ \textup{d}x \textup{d}t\leq 0, 
\end{equation*}
where $P=P(\varrho)$ denotes the pressure potential; further details can be found in Section \ref{Energy inequality}.

For $\sigma=1$, it then makes sense to consider the relative energy functional as
\begin{equation} \label{relative energy}
\mathcal{E}\left(\varrho, \textbf{u}, \Phi \ | \ \widetilde{\varrho}, \widetilde{\textbf{u}}, \widetilde{\Phi}\right) = \int_{\Omega} \left(\frac{1}{2} \varrho |\textbf{u}-\widetilde{\textbf{u}}|^2+ P(\varrho)- P'(\widetilde{\varrho}) (\varrho-\widetilde{\varrho}) -P(\widetilde{\varrho}) +\frac{1}{2} G \ |\nabla_x(\Phi- \widetilde{\Phi})|^2\right) \textup{d}x,
\end{equation}
where $(\varrho, \textbf{u}, \Phi)$ and $(\widetilde{\varrho}, \widetilde{\textbf{u}}, \widetilde{\Phi})$ denote the weak and strong solutions of system \eqref{the Navier-Stokes-Poisson system}, respectively. Indeed, the convexity of the pressure potential $P=P(\varrho)$ guarantees the non-negativity of $\mathcal{E}=\mathcal{E}(t)$ for any time $t\in [0,T]$. Moreover, if $\varrho, \widetilde{\varrho}>0$, proving the weak-strong uniqueness principle is equivalent to showing that $\mathcal{E}(t)\equiv 0$ for any time $t\in [0,T]$; this is the strategy pursued by He and Tan \cite{HeTan} to prove the weak-strong uniqueness principle on a bounded domain $\Omega$.

For $\sigma=-1$, however, the problem gets more complicated. First of all, the analogous of \eqref{relative energy} would be 
\begin{equation*}
\mathcal{E}\left(\varrho, \textbf{u}, \Phi \ | \ \widetilde{\varrho}, \widetilde{\textbf{u}}, \widetilde{\Phi}\right) = \int_{\Omega} \left(\frac{1}{2} \varrho |\textbf{u}-\widetilde{\textbf{u}}|^2+ P(\varrho)- P'(\widetilde{\varrho}) (\varrho-\widetilde{\varrho}) -P(\widetilde{\varrho}) -\frac{1}{2} G \ |\nabla_x(\Phi- \widetilde{\Phi})|^2\right) \textup{d}x,
\end{equation*}
but with this choice we cannot guarantee the non-negativity of $\mathcal{E}=\mathcal{E}(t)$ for any time $t\in [0,T]$. Moreover, for $\sigma=-1$, the Navier-Stokes-Poisson system \eqref{the Navier-Stokes-Poisson system} describes the motion of a gaseous star and thus the optimal choice for $\Omega$ is to be exterior to a rigid object; however, working on an unbounded domain prevents us from using some useful tools such as the Sobolev-Poincar\'{e} inequality. A third difficulty is represented by the fact that the density is close to zero, at least in the far field, and therefore we loose the strict positivity of $\varrho, \widetilde{\varrho}$. 

To handle these problems for $\sigma=-1$, first of all we will consider the relative energy to be a function of the density and velocity only, cf. Section \ref{Relative eneergy inequality}. Indeed, it is well--known that the solution $\Phi$ of the Poisson equation 
\begin{equation*}
-\Delta_x \Phi =f
\end{equation*}
on the whole space $\mathbb{R}^3$ is uniquely determined by the corresponding known term $f$. Therefore, in our context it makes sense to write $\Phi = (-\Delta_x)^{-1} (\varrho+g)$, provided $\varrho$ can be extended to be zero outside $\Omega$, and, as a consequence of the H\"{o}rmander-Mikhlin Theorem, we will be able to recover some useful estimates for $\nabla_x \Phi$ depending on the density only, which will be fundamental in proving the weak--strong uniqueness principle, cf. Section \ref{Weak-strong uniqueness}. The problem of the vanishing strong solution $\widetilde{\varrho}$ can be handled following the same idea developed by  Feireisl and Novotn\'{y} in \cite{FeiNov}, considering first $\widetilde{\varrho}+ \varepsilon$, with $\varepsilon>0$, instead of $\widetilde{\varrho}$ in the relative energy functional to get a strictly positive quantity and passing to the limit $\varepsilon \rightarrow 0$. In particular, in \cite{FeiNov} the authors were able to prove the weak--strong uniqueness principle for a general compressible viscous fluid on an exterior domain and with a barotropic pressure of the type $p(\varrho)=a \varrho^{\gamma}$ with $1<\gamma \leq 2$. In our context, the presence of the gravitational potential forces the range for the adiabatic exponent to be 
\begin{equation*}
	\frac{9}{5} \leq \gamma \leq 2,
\end{equation*}
where, in particular, the lower bound coincides with the critical exponent appearing in the book of Lions \cite{Lio1}. However, the result can be improved if we manage to get better regularity for the density. This will be achieved deducing pressure estimates ``up to the boundary", obtained adapting the work of Feireisl and Petzeltov\'{a} in \cite{FeiPet} for a Lipschitz exterior domain and exploiting, in particular, the boundedness of the singular operator $\nabla_x (-\Delta_x)^{-1} \nabla_x$, cf. Section \ref{Pressure estimates up to the boundary}.

The work is organized as follows. Section \ref{The system} will be devoted to the detailed description of the system we are going to study, deducing, in particular, the energy inequality associated to it. In Section \ref{Dissipative weak solution}, we provide the definition of a dissipative weak solution, cf. Definition \ref{dissipative solution}, while in Section \ref{Relative eneergy inequality}, we recover the relative energy inequality, cf. Lemma \ref{lemma relative energy inequality}. Section \ref{Weak-strong uniqueness} will be devoted to the proof of the weak--strong uniqueness principle, cf. Theorem \ref{weak-strong uniqueness}. Finally, in Section \ref{pressure estimate up to the boundary}, we are able to improve the result obtained in the previous section, cf. Corollary \ref{corollary}, by means of the pressure estimates ``up to the boundary", cf. Theorem \ref{pressure estimate}.

\section{The system} \label{The system}
We consider the Navier-Stokes-Poisson system, describing the motion of a gaseous star:
\begin{align} 
\partial_t \varrho + \divv_x (\varrho \textbf{u}) &=0, \label{continuity equation} \\
\partial_t(\varrho \textbf{u}) + \divv_x(\varrho \textbf{u}\otimes \textbf{u})+ \nabla_x p(\varrho) &= \divv_x \mathbb{S}(\nabla_x \textbf{u}) + G\varrho \nabla_x \Phi, \label{balance of momentum} \\
-\Delta_x \Phi &= \varrho+g. \label{Poisson equation}
\end{align}
Here, the unknown variables are the density $\varrho=\varrho(t,x)$, the velocity $\textbf{u}=\textbf{u}(t,x)$ and the gravitational potential $\Phi=\Phi(t,x)$ of the star. For simplicity, we assume an isentropic pressure $p=p(\varrho)$ of the type 
\begin{equation*}
p(\varrho)= a \varrho ^{\gamma}
\end{equation*}
for a constant $a>0$, with the adiabatic exponent
\begin{equation*}
\gamma > 1,
\end{equation*}
while the viscous stress tensor is a linear function of the velocity gradient, more specifically it satisfies Newton's rheological law
\begin{equation} \label{viscous stress tensor}
\mathbb{S}(\nabla_x \textbf{u})= \mu \left(\nabla_x \textbf{u}+\nabla_x^T \textbf{u}- \frac{2}{3}(\divv_x \textbf{u})\mathbb{I}\right) + \lambda(\divv_x \textbf{u})\mathbb{I},
\end{equation}
with $\mu >0$ and $\lambda\geq 0$. Finally, $G$ is a positive constant and $g=g(x)$ is a given function, which for simplicity we suppose to satisfy
\begin{equation*}
	g \in L^1 \cap L^{\infty}(\mathbb{R}^3).
\end{equation*}

We will study the system on $(0,T) \times \Omega$, where the time $T>0$ can be chosen arbitrarily large while $\Omega \subset \mathbb{R}^3$ is a Lipschitz exterior domain, on the boundary of which we impose
\begin{equation} \label{boundary conditions}
\textbf{u}|_{\partial \Omega}=0;
\end{equation}
moreover, we fix the conditions at infinity as
\begin{equation}
\varrho \rightarrow 0, \quad \textbf{u} \rightarrow 0 \quad \mbox{as } |x| \rightarrow \infty.
\end{equation}
The system is formally closed prescribing the initial conditions for the density and momentum:
\begin{equation} \label{initial conditions}
\varrho(0, \cdot)=\varrho_0, \quad (\varrho\textbf{u})(0,\cdot)= \textbf{m}_0. 
\end{equation}

\subsection{Poisson equation}

Noticing that the Poisson equation \eqref{Poisson equation} holds on the whole space $\mathbb{R}^3$, provided $\varrho$ is extended to be zero outside $\Omega$, we can write
\begin{equation*}
\Phi(t,x) = [\Gamma * (\varrho+g)] (t,x) = \int_{\mathbb{R}^3}  [\varrho(t,y)+ g(y) ] \ \Gamma(|x-y|) \ \textup{d}y 
\end{equation*}
where
\begin{equation*}
\Gamma(|x|) = \frac{1}{4\pi |x|}
\end{equation*}
is the \textit{fundamental solution} of the Laplace equation \eqref{Poisson equation}. Therefore, the gravitational potential $\Phi$ is uniquely determined by the corresponding density $\varrho$ and therefore it is not necessary to consider it as a third variable.

\subsection{Energy inequality} \label{Energy inequality}

Multiplying equation \eqref{balance of momentum} by $\textbf{u}$ and noticing that each term of this product can be rewritten as
\begin{equation} \label{energy calculations}
\begin{aligned}
\partial_t(\varrho \textbf{u})\cdot \textbf{u}&= \frac{\partial}{\partial t} \left(\frac{1}{2} \varrho |\textbf{u}|^2\right)+ \frac{1}{2} |\textbf{u}|^2 \partial_t \varrho, \\
\divv_x(\varrho \textbf{u} \otimes \textbf{u}) \cdot \textbf{u} &= \divv_x \left( \frac{1}{2} \varrho |\textbf{u}|^2 \textbf{u}\right) +\frac{1}{2} |\textbf{u}|^2  \divv_x(\varrho \textbf{u}), \\
\nabla_x p(\varrho)\cdot \textbf{u}&= \divv_x[p(\varrho)\textbf{u}]-p(\varrho)\divv_x \textbf{u}, \\
\divv_x \mathbb{S}(\nabla_x \textbf{u}) \cdot \textbf{u}&= \divv_x[\mathbb{S}(\nabla_x \textbf{u})\textbf{u}]- \mathbb{S}(\nabla_x \textbf{u}):\nabla_x \textbf{u}, \\
\varrho \nabla_x \Phi \cdot \textbf{u} &= \divv_x (\varrho \Phi \textbf{u}) - \Phi \divv_x(\varrho \textbf{u}) ,
\end{aligned}
\end{equation}
where, in particular, from \eqref{continuity equation} and \eqref{Poisson equation},
\begin{align*}
- \Phi \divv_x(\varrho \textbf{u}) &= \Phi \partial_t \varrho = -\Phi \partial_t \Delta_x \Phi= -\Phi \divv_x [\partial_t \nabla_x \Phi] \\
&= -\divv_x[\Phi \ \partial_t \nabla_x \Phi]+ \nabla_x \Phi \cdot \partial_t \nabla_x \Phi\\
&= -\divv_x[\Phi \ \partial_t \nabla_x \Phi]+ \frac{\partial}{\partial t} \left( \frac{1}{2}|\nabla_x \Phi|^2 \right),
\end{align*}
from the continuity equation \eqref{continuity equation}, we get
\begin{align*}
\frac{1}{2} \frac{\partial}{\partial t} \left(\varrho |\textbf{u}|^2 - G |\nabla_x \Phi|^2 \right)&+ \divv_x\left[ \left( \frac{1}{2} \varrho |\textbf{u}|^2 +p(\varrho) \right) \textbf{u} \right] -p(\varrho)\divv_x \textbf{u}  \\
&= \divv_x \left[ \big( \mathbb{S}(\nabla_x \textbf{u})+\varrho \Phi \big) \textbf{u} \right] - \mathbb{S}(\nabla_x \textbf{u}):\nabla_x \textbf{u} -\divv_x[\Phi \ \partial_t \nabla_x \Phi].
\end{align*}
Integrating over $\Omega$, keeping in mind that $\textbf{u}$ satisfies the boundary condition \eqref{boundary conditions} and imposing that
\begin{equation} \label{boundary condition gravitational potential}
\nabla_x \Phi \cdot \textbf{n}|_{\partial \Omega}=0,
\end{equation}
we get 
\begin{equation} \label{first energy equality}
\begin{aligned}
\frac{1}{2} \frac{\textup{d}}{\textup{d}t} \int_{\Omega} \left( \varrho |\textbf{u}|^2 -G |\nabla_x \Phi|^2  \right) \textup{d}x -\int_{\Omega} p(\varrho)\divv_x \textbf{u} \ \textup{d}x+\int_{\Omega} \mathbb{S}(\nabla_x \textbf{u}):\nabla_x \textbf{u} \ \textup{d}x =0.
\end{aligned}
\end{equation}
Introducing the \textit{pressure potential} $P=P(\varrho)$ as a solution of
\begin{equation} \label{pressure potential equation}
\varrho P'(\varrho) - P(\varrho) =p(\varrho),
\end{equation}
from the continuity equation \eqref{continuity equation}, we can write
\begin{equation*}
-p(\varrho) \divv_x \textbf{u} = \partial_t P(\varrho) + \divv_x [P(\varrho) \textbf{u}].
\end{equation*}
We finally get the \textit{energy inequality}
\begin{equation*}
\frac{\textup{d}}{\textup{d}t} \int_{\Omega} \left(\frac{1}{2} \varrho |\textbf{u}|^2 +P(\varrho)- \frac{1}{2} G|\nabla_x\Phi|^2  \right)(t,\cdot) \  \textup{d}x+ \int_{\Omega} \mathbb{S}(\nabla_x \textbf{u}):\nabla_x \textbf{u} \ \textup{d}x \leq 0.
\end{equation*}

Alternatively, we can leave the last term in \eqref{energy calculations} unchanged and get
\begin{equation} \label{energy inequality 1}
\frac{\textup{d}}{\textup{d}t} E(t) + \int_{\Omega} \mathbb{S}(\nabla_x \textbf{u}):\nabla_x \textbf{u} \ \textup{d}x \leq G  \int_{\Omega} \varrho \nabla_x \Phi \cdot \textbf{u} \ \textup{d}x, 
\end{equation}
with 
\begin{equation} \label{energy}
E(t):= \int_{\Omega} \left(\frac{1}{2} \varrho |\textbf{u}|^2 +P(\varrho)  \right) (t,\cdot) \ \textup{d}x.
\end{equation}
\begin{remark}
	Hereafter, we will consider \eqref{energy inequality 1}, \eqref{energy} and therefore we don't need the boundary condition \eqref{boundary condition gravitational potential} for the gravitation potential.
\end{remark}

\section{Dissipative weak solution} \label{Dissipative weak solution}

We are now ready to give the definition of a \textit{dissipative weak solution} to the compressible Navier-Stokes-Poisson system. Following the same definition presented in \cite{DocFeiPetStr}, a dissipative weak solution of problem \eqref{continuity equation}--\eqref{initial conditions} is a couple $[\varrho, \textbf{u}]$ such that
\begin{enumerate}
	\item equation \eqref{continuity equation} and its renormalized version hold in a distributional sense on the whole $(0,T) \times \mathbb{R}^3$, provided $\varrho$ and $\textbf{u}$ are extended to be zero outside $\Omega$;
	\item equation \eqref{balance of momentum} holds in a distributional sense on $(0,T) \times \Omega$;
	\item equation \eqref{Poisson equation} is satisfied a.e. on $\mathbb{R}^3$ for any fixed $t\in (0,T)$, provided $\varrho$ is extended to be zero outside $\Omega$; 
	\item  the integral version of the energy inequality \eqref{energy inequality 1} holds on $(0,T)$.
\end{enumerate}
More precisely, we have the following definition.

\begin{definition} \label{dissipative solution}
	The pair of functions $(\varrho, \textbf{u})$ is called \textit{dissipative weak solution} of the Navier-Stokes-Poisson system \eqref{continuity equation}--\eqref{initial conditions} with initial conditions $\varrho_0$, $\textbf{m}_0$ satisfying
	\begin{equation*}
	\varrho_0 \in L^1 \cap L^{\gamma}(\Omega), \quad \varrho_0\geq 0\mbox{ a.e. in } \Omega, \quad \frac{|\textup{\textbf{m}}_0|^2}{\varrho_0} \in L^1(\Omega).
	\end{equation*}
	if the following holds:
	\begin{itemize}
		\item[(i)] \textit{regularity class}: 
		\begin{align*}
		\varrho &\in C_{\textup{weak}}([0,T]; L^1 \cap L^{\gamma}(\Omega)), \\
		\varrho\textbf{u} &\in C_{\textup{weak}}([0,T]; L^{\frac{2\gamma}{\gamma+1}}(\Omega; \mathbb{R}^3)), \\
		\textbf{u}&\in L^2(0,T; D^{1,2}_0(\Omega; \mathbb{R}^3)), 
		\end{align*}
		and $\varrho$ is a non-negative function a.e. in $(0,T) \times \Omega$;
		\item[(ii)] \textit{weak formulation of the continuity equation}: for any $\tau \in (0,T)$, the integral identity
		\begin{equation} \label{weak formulation continuity equation}
		\left[ \int_{\Omega} \varrho \varphi(t,\cdot) 	\ \textup{d}x\right]_{t=0}^{t=\tau} = \int_{0}^{\tau} \int_{\Omega} [\varrho \partial_t\varphi+ \varrho\textbf{u}\cdot \nabla_x\varphi ] \ \textup{d}x\textup{d}t,
		\end{equation}
		holds for any $\varphi \in C_c^1([0,T]\times \mathbb{R}^3)$, with $\varrho(0,\cdot)= \varrho_0$, provided $\varrho$ and $\textbf{u}$ are extended to be zero outside $\Omega$;
		\item[(iii)] \textit{weak formulation of the renormalized continuity equation}: for any $\tau \in (0,T)$ and any functions
		\begin{equation*}
		B\in C[0,\infty) \cap C^1(0,\infty), \ b\in C[0,\infty) \mbox{ bounded on } [0,\infty),
		\end{equation*}
		\begin{equation*}
		B(0)=b(0)=0 \quad \mbox{and} \quad b(z)=zB'(z)-B(z) \mbox{ for any }z>0,
		\end{equation*}
		the integral identity
		\begin{equation} \label{weak formulation renormalized continuity equation}
		\left[ \int_{\Omega} B(\varrho) \varphi(t,\cdot) \  \textup{d}x\right]_{t=0}^{t=\tau} = \int_{0}^{\tau} \int_{\Omega} [B(\varrho) \partial_t\varphi+ B(\varrho)\textbf{u}\cdot \nabla_x\varphi + b(\varrho)\divv_x \textbf{u} \varphi] \ \textup{d}x\textup{d}t,
		\end{equation}
		holds for any $\varphi \in C_c^1([0,T]\times\mathbb{R}^3)$, provided $\varrho$ and $\textbf{u}$ are extended to be zero outside $\Omega$;
		\item[(iv)] \textit{weak formulation of the balance of momentum}: for any $\tau \in (0,T)$, the integral identity
		\begin{equation} \label{weak formulation of the balance of momentum}
		\begin{aligned}
		\left[ \int_{\Omega} \varrho \textbf{u} \cdot \bm{\varphi}(t,\cdot) \ \textup{d}x\right]_{t=0}^{t=\tau} &= \int_{0}^{\tau} \int_{\Omega} [\varrho \textbf{u}\cdot \partial_t\bm{\varphi}+ (\varrho \textbf{u} \otimes \textbf{u}): \nabla_x \bm{\varphi}+ p(\varrho)\divv_x \bm{\varphi}] \ \textup{d}x\textup{d}t , \\
		&-\int_{0}^{\tau} \int_{\Omega} \mathbb{S}(\nabla_x \textbf{u}): \nabla_x \bm{\varphi} \ \textup{d}x\textup{d}t + G \int_{0}^{\tau} \int_{\Omega} \varrho \nabla_x \Phi \cdot \bm{\varphi} \ \textup{d}x\textup{d}t
		\end{aligned}
		\end{equation}
		holds for any $\bm{\varphi} \in C_c^1([0,T]\times \overline{\Omega}; \mathbb{R}^3)$, $\bm{\varphi}|_{\partial \Omega}=0$, with $(\varrho \textbf{u})(0,\cdot)= \textbf{m}_0$;
		\item[(v)] \textit{Poisson equation}: for any fixed $t \in [0,T]$, equation \eqref{Poisson equation} is satisfied a.e. on $\mathbb{R}^3$, provided $\varrho$ is extended to be zero outside $\Omega$;
		\item[(vi)] \textit{energy inequality}: inequality
		\begin{equation} \label{energy inequality}
		\,[E(t)]_{t=0}^{t=\tau} + \int_{0}^{\tau} \int_{\Omega} \mathbb{S}(\nabla_x \textbf{u}):\nabla_x \textbf{u} \ \textup{d}x \textup{d}t \leq G \int_{0}^{\tau} \int_{\Omega} \varrho \nabla_x \Phi \cdot \textbf{u} \ \textup{d}x \textup{d}t,
		\end{equation}
		holds for a.e. $\tau \in (0,T)$, with 
		\begin{equation*}
		E(t)= \int_{\Omega} \left(\frac{1}{2} \varrho |\textbf{u}|^2 +P(\varrho)  \right) (t,\cdot) \ \textup{d}x.
		\end{equation*}
	\end{itemize}
\end{definition}

\begin{remark}
	Hereafter, we denote with $D_0^{1,p}(\Omega)$, $1\leq p <\infty$ the completion of $C_c^{\infty}(\Omega)$ with respect to the norm
	\begin{equation*}
	\| u\|_{D_0^{1,p}(\Omega)}= \| \nabla_x u\|_{L^p(\Omega)}.
	\end{equation*}
\end{remark}

\begin{remark}
	Notice that conditions (i) of Definition \ref{dissipative solution} come naturally from the assumption that the total mechanical energy of the system is bounded at the initial time $t=0$.
\end{remark}

\begin{remark} \label{less regularity test functions}
	By a density argument, the test functions in the weak formulations \eqref{weak formulation continuity equation}--\eqref{weak formulation of the balance of momentum} can be taken less regular as long as all the integrals remain well--defined.
\end{remark}

\section{Relative energy inequality} \label{Relative eneergy inequality}

The aim of this section is to prove that any dissipative weak solution $[\varrho, \textbf{u}]$ of the compressible Navier--Stokes--Poisson system \eqref{continuity equation}--\eqref{initial conditions} satisfies an extended version of the energy inequality, known as \textit{relative energy inequality}, for $[\widetilde{\varrho}, \widetilde{\textbf{u}}]$ regular enough. More precisely, our goal is to prove the following result.

\begin{lemma} \label{lemma relative energy inequality}
	Let $[\varrho, \textup{\textbf{u}}]$ be a dissipative weak solution of the compressible Navier-Stokes-Poisson system \eqref{continuity equation}--\eqref{initial conditions} in the sense of Definition \ref{dissipative solution}. Then for any pair of functions $[\widetilde{\varrho}, \widetilde{\textup{\textbf{u}}}]$ such that 
	\begin{align*}
	\widetilde{\varrho}&\in C^1([0,T]\times \overline{\Omega}), \\
	\widetilde{\textup{\textbf{u}}} &\in C^1_c([0,T]\times \overline{\Omega}; \mathbb{R}^3),
	\end{align*}
	with $\widetilde{\varrho}>0$, the following inequality holds:
	\begin{equation} \label{relative energy inequality}
	\begin{aligned}
	\left[ \int_{\Omega} E(\varrho, \textup{\textbf{u}} \ | \ \widetilde{\varrho}, \widetilde{\textup{\textbf{u}}}) (t, \cdot) \  \textup{d}x\right]_{t=0}^{t=\tau}&+ \int_{0}^{\tau} \int_{\Omega} \mathbb{S}(\nabla_x \textup{\textbf{u}}):\nabla_x (\textup{\textbf{u}}-\widetilde{\textup{\textbf{u}}}) \ \textup{d}x \textup{d}t\\
	\leq& -\int_{0}^{\tau} \int_{\Omega}  \varrho(\textup{\textbf{u}}-\widetilde{\textup{\textbf{u}}})  \cdot[\partial_t \widetilde{\textup{\textbf{u}}} + \nabla_x \widetilde{\textup{\textbf{u}}} \cdot \widetilde{\textup{\textbf{u}}}+ \nabla_x P'(\widetilde{\varrho})] \ \textup{d}x \textup{d}t \\
	& - \int_{0}^{\tau} \int_{\Omega} \varrho (\textup{\textbf{u}}-\widetilde{\textup{\textbf{u}}}) \cdot \nabla_x \widetilde{\textup{\textbf{u}}} \cdot ( \textup{\textbf{u}}-\widetilde{\textup{\textbf{u}}}) \ \textup{d}x \textup{d}t \\
	&-\int_{0}^{\tau} \int_{\Omega}  [p(\varrho)-p'(\widetilde{\varrho})(\varrho-\widetilde{\varrho})-p(\widetilde{\varrho})] \divv_x \widetilde{\textup{\textbf{u}}} \ \textup{d}x \textup{d}t \\
	&+\int_{0}^{\tau}\int_{\Omega} p'(\widetilde{\varrho}) \left( 1- \frac{\varrho}{\widetilde{\varrho}}\right) [\partial_t \widetilde{\varrho} + \divv_x (\widetilde{\varrho} \widetilde{\textup{\textbf{u}}})]  \  \textup{d}x \textup{d}t \\
	&+ G\int_{0}^{\tau}\int_{\Omega} \varrho (\textup{\textbf{u}}-\widetilde{\textup{\textbf{u}}}) \cdot \nabla_x \Phi \ \textup{d}x \textup{d}t
	\end{aligned}
	\end{equation}
	with
	\begin{equation*}
	E(\varrho,\textup{\textbf{u}} \ | \ \widetilde{\varrho}, \widetilde{\textup{\textbf{u}}}) = \frac{1}{2} \varrho |\textup{\textbf{u}}-\widetilde{\textup{\textbf{u}}}|^2+ P(\varrho)- P'(\widetilde{\varrho}) (\varrho-\widetilde{\varrho}) -P(\widetilde{\varrho}).
	\end{equation*}
\end{lemma}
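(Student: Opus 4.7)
The plan is to follow the classical Feireisl--Novotn\'{y} scheme: decompose $\int_\Omega E(\varrho,\textbf{u}\,|\,\widetilde{\varrho},\widetilde{\textbf{u}})\,\textup{d}x$ into pieces whose time evolution is accessible, compute each piece from either the energy inequality \eqref{energy inequality} or the weak formulations \eqref{weak formulation continuity equation} and \eqref{weak formulation of the balance of momentum}, and then reorganize algebraically into the target expression. The starting point is the identity $\widetilde{\varrho}P'(\widetilde{\varrho})-P(\widetilde{\varrho})=p(\widetilde{\varrho})$ from \eqref{pressure potential equation}, which allows me to write
\[
\int_\Omega E(\varrho,\textbf{u}\,|\,\widetilde{\varrho},\widetilde{\textbf{u}})\,\textup{d}x=E(t)-\int_\Omega \varrho\,\textbf{u}\cdot\widetilde{\textbf{u}}\,\textup{d}x+\int_\Omega \tfrac{1}{2}\varrho|\widetilde{\textbf{u}}|^2\,\textup{d}x-\int_\Omega \varrho P'(\widetilde{\varrho})\,\textup{d}x+\int_\Omega p(\widetilde{\varrho})\,\textup{d}x.
\]

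I would then handle each piece as follows. The first is controlled directly via \eqref{energy inequality}. The second comes from testing \eqref{weak formulation of the balance of momentum} with $\bm{\varphi}=\widetilde{\textbf{u}}$, admissible because $\widetilde{\textbf{u}}\in C^1_c$ and vanishes on $\partial\Omega$. The third comes from testing \eqref{weak formulation continuity equation} with $\varphi=\tfrac{1}{2}|\widetilde{\textbf{u}}|^2$, which is compactly supported in space. The fourth comes from testing \eqref{weak formulation continuity equation} with $\varphi=P'(\widetilde{\varrho})$ (this is the delicate step, addressed below). The fifth is a direct chain-rule computation: using $p'(\widetilde{\varrho})=\widetilde{\varrho}P''(\widetilde{\varrho})$, obtained by differentiating \eqref{pressure potential equation}, one gets $\tfrac{\textup{d}}{\textup{d}t}\int_\Omega p(\widetilde{\varrho})\,\textup{d}x=\int_\Omega \widetilde{\varrho}\,\partial_t P'(\widetilde{\varrho})\,\textup{d}x$. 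Summing the five identities with appropriate signs and rearranging, the convective contributions $-(\varrho\textbf{u}\otimes\textbf{u}):\nabla_x\widetilde{\textbf{u}}$ and $+\tfrac{1}{2}\varrho\textbf{u}\cdot\nabla_x(|\widetilde{\textbf{u}}|^2)$ combine into $-\varrho(\textbf{u}\cdot\nabla_x)\widetilde{\textbf{u}}\cdot(\textbf{u}-\widetilde{\textbf{u}})$, which splits via $\textbf{u}=\widetilde{\textbf{u}}+(\textbf{u}-\widetilde{\textbf{u}})$ into the $\nabla_x\widetilde{\textbf{u}}\cdot\widetilde{\textbf{u}}$ piece of line 1 and the quadratic $\nabla_x\widetilde{\textbf{u}}\cdot(\textbf{u}-\widetilde{\textbf{u}})$ piece of line 2 of \eqref{relative energy inequality}. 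The pressure contributions $-p(\varrho)\divv_x\widetilde{\textbf{u}}-\varrho\textbf{u}\cdot\nabla_x P'(\widetilde{\varrho})+(\widetilde{\varrho}-\varrho)\partial_t P'(\widetilde{\varrho})$ are regrouped by adding and subtracting $\varrho\widetilde{\textbf{u}}\cdot\nabla_x P'(\widetilde{\varrho})$, producing the $-\varrho(\textbf{u}-\widetilde{\textbf{u}})\cdot\nabla_x P'(\widetilde{\varrho})$ contribution to line 1 and, via $\nabla_x P'(\widetilde{\varrho})=\widetilde{\varrho}^{-1}p'(\widetilde{\varrho})\nabla_x\widetilde{\varrho}$ together with integration by parts of $\int p(\widetilde{\varrho})\divv_x\widetilde{\textbf{u}}$, lines 3 and 4. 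The gravitational contributions $G\int\varrho\nabla_x\Phi\cdot\textbf{u}$ from the energy inequality and $-G\int\varrho\nabla_x\Phi\cdot\widetilde{\textbf{u}}$ from the momentum identity combine into line 5, while the viscous terms yield $\int\mathbb{S}(\nabla_x\textbf{u}):\nabla_x(\textbf{u}-\widetilde{\textbf{u}})$ on the left-hand side.

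The main obstacle is justifying $\varphi=P'(\widetilde{\varrho})$ as a test function in \eqref{weak formulation continuity equation}, since $\widetilde{\varrho}$ (and therefore $P'(\widetilde{\varrho})$) is not compactly supported in $\mathbb{R}^3$. I would invoke Remark \ref{less regularity test functions} together with a cutoff/density argument: pick a family $\chi_R\in C_c^\infty(\mathbb{R}^3)$ with $\chi_R\to 1$ as $R\to\infty$, verify the identity with $\varphi=\chi_R P'(\widetilde{\varrho})$, and pass to the limit $R\to\infty$ via dominated convergence. The required integrability follows from the regularity class (i) of Definition \ref{dissipative solution}---in particular $\varrho\in C_\textup{weak}([0,T];L^1\cap L^\gamma(\Omega))$ and $\varrho\textbf{u}\in C_\textup{weak}([0,T];L^{2\gamma/(\gamma+1)}(\Omega;\mathbb{R}^3))$---coupled with pointwise bounds on $P'(\widetilde{\varrho})$, $\partial_t P'(\widetilde{\varrho})$, and $\nabla_x P'(\widetilde{\varrho})$ coming from $\widetilde{\varrho}\in C^1([0,T]\times\overline{\Omega})$ and $\widetilde{\varrho}>0$ (the latter being implicitly strong enough to make all the integrals appearing in \eqref{relative energy inequality} well defined, as is the case for the strong solutions considered in the weak--strong uniqueness application).
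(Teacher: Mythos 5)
Your proposal is correct and follows essentially the same route as the paper: test the momentum equation \eqref{weak formulation of the balance of momentum} with $\bm{\varphi}=\widetilde{\textbf{u}}$, the continuity equation \eqref{weak formulation continuity equation} with $\varphi=\tfrac{1}{2}|\widetilde{\textbf{u}}|^2$ and $\varphi=P'(\widetilde{\varrho})$, combine these with the energy inequality \eqref{energy inequality}, and rearrange using $\widetilde{\varrho}P'(\widetilde{\varrho})-P(\widetilde{\varrho})=p(\widetilde{\varrho})$ and $p'(\widetilde{\varrho})=\widetilde{\varrho}P''(\widetilde{\varrho})$. Your explicit cutoff argument for admitting $P'(\widetilde{\varrho})$ as a test function is a welcome addition that the paper leaves implicit in Remark~\ref{less regularity test functions}, but it does not change the underlying strategy.
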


\begin{remark}
	We define the \textit{relative energy} for any $\tau\in [0,T]$ as
	\begin{equation*}
	\mathcal{E}(\varrho,\textup{\textbf{u}} \ | \ \widetilde{\varrho}, \widetilde{\textup{\textbf{u}}}) (\tau)= \int_{\Omega} E(\varrho,\textup{\textbf{u}} \ | \ \widetilde{\varrho}, \widetilde{\textup{\textbf{u}}}) (\tau, \cdot) \  \textup{d}x ,
	\end{equation*}
	and consequently, we refer to relation \eqref{relative energy inequality} as \textit{relative energy inequality}.
\end{remark}

\begin{proof}
	First of all, we can take $\bm{\varphi}=\widetilde{\textbf{u}}$ in the weak formulation of the momentum equation \eqref{weak formulation of the balance of momentum} to obtain
	\begin{equation} \label{first equation for relative energy inequality}
	\begin{aligned}
	\left[ \int_{\Omega} \varrho \textbf{u} \cdot 	\widetilde{\textbf{u}}(t,\cdot) \ \textup{d}x\right]_{t=0}^{t=\tau} &= \int_{0}^{\tau} \int_{\Omega} [\varrho \textbf{u}\cdot \partial_t	\widetilde{\textbf{u}}+ (\varrho \textbf{u} \otimes \textbf{u}): \nabla_x \widetilde{\textbf{u}}+ p(\varrho)\divv_x \widetilde{\textbf{u}}] \ \textup{d}x\textup{d}t \\
	&-\int_{0}^{\tau} \int_{\Omega} \mathbb{S}(\nabla_x \textbf{u}): \nabla_x \widetilde{\textbf{u}} \ \textup{d}x\textup{d}t +G \int_{0}^{\tau} \int_{\Omega} \varrho \nabla_x \Phi \cdot \widetilde{\textbf{u}} \ \textup{d}x\textup{d}t
	\end{aligned}
	\end{equation}
	$\varphi=\frac{1}{2} |\widetilde{\textbf{u}}|^2, P'(\widetilde{\varrho})$ in the weak formulation of the continuity equation \eqref{weak formulation continuity equation} to get
	\begin{equation} \label{second equation for relative energy inequality}
	\left[ \int_{\Omega} \frac{1}{2}\varrho |	\widetilde{\textbf{u}}|^2(t,\cdot) 	\ \textup{d}x\right]_{t=0}^{t=\tau} = \int_{0}^{\tau} \int_{\Omega} [\varrho \widetilde{\textbf{u}} \cdot \partial_t 	\widetilde{\textbf{u}}+ \varrho\textbf{u}\cdot \nabla_x \widetilde{\textbf{u}} \cdot \widetilde{\textbf{u}} ] \ \textup{d}x\textup{d}t,
	\end{equation}
	\begin{equation} \label{third equation for relative energy inequality} 
	\left[ \int_{\Omega} \varrho P'(\widetilde{\varrho})(t,\cdot) 	\ \textup{d}x\right]_{t=0}^{t=\tau} = \int_{0}^{\tau} \int_{\Omega} [\varrho \partial_tP'(\widetilde{\varrho})+ \varrho\textbf{u}\cdot \nabla_x P'(\widetilde{\varrho}) ] \ \textup{d}x\textup{d}t,
	\end{equation}
	respectively.
	
	Subtracting equations \eqref{first equation for relative energy inequality}, \eqref{third equation for relative energy inequality} and summing equation \eqref{second equation for relative energy inequality} to the energy inequality \eqref{energy inequality}, we get 
	\begin{align*}
	&\left[ \int_{\Omega} \left(\frac{1}{2} \varrho |\textbf{u}-\widetilde{\textbf{u}}|^2+ P(\varrho)- P'(\widetilde{\varrho}) (\varrho-\widetilde{\varrho}) -P(\widetilde{\varrho})\right)  \textup{d}x\right]_{t=0}^{t=\tau} \\
	&\quad \quad + \int_{0}^{\tau} \int_{\Omega} \mathbb{S}(\nabla_x \textbf{u}):\nabla_x (\textbf{u}-\widetilde{\textbf{u}}) \ \textup{d}x \textup{d}t \\
	&\quad \quad \leq -\int_{0}^{\tau} \int_{\Omega}  \varrho(\textbf{u}-\widetilde{\textbf{u}})  \cdot[\partial_t \widetilde{\textbf{u}} + \nabla_x \widetilde{\textbf{u}} \cdot \widetilde{\textbf{u}} + \nabla_x P'(\widetilde{\varrho})] \ \textup{d}x \textup{d}t \\
	&\quad \quad - \int_{0}^{\tau} \int_{\Omega} \varrho ( \textbf{u}-\widetilde{\textbf{u}} ) \cdot \nabla_x \widetilde{\textbf{u}} \cdot ( \textbf{u}-\widetilde{\textbf{u}}) \ \textup{d}x \textup{d}t \\
	& \quad \quad -\int_{0}^{\tau} \int_{\Omega}  [p(\varrho)-p'(\widetilde{\varrho})(\varrho-\widetilde{\varrho})-p(\widetilde{\varrho})] \divv_x \widetilde{\textbf{u}} \ \textup{d}x \textup{d}t \\
	&\quad \quad + G\int_{0}^{\tau}\int_{\Omega} \varrho (\textup{\textbf{u}}-\widetilde{\textup{\textbf{u}}}) \cdot \nabla_x \Phi  \ \textup{d}x \textup{d}t \\
	& \quad \quad +\int_{0}^{\tau}\int_{\Omega} \left[ [p'(\widetilde{\varrho})-\varrho P''(\widetilde{\varrho})] \partial_t \widetilde{\varrho} +p'(\widetilde{\varrho})(\widetilde{\varrho}-\varrho) \divv_x \widetilde{\textbf{u}} +[p'(\widetilde{\varrho})-\varrho P''(\widetilde{\varrho})] \nabla_x \widetilde{\varrho} \cdot \widetilde{\textbf{u}} \right] \textup{d}x \textup{d}t
	\end{align*}
	where we summed and subtracted quantities $\varrho \widetilde{\textbf{u}} \cdot \nabla_x P'(\widetilde{\varrho})$, $p'(\widetilde{\varrho})(\varrho-\widetilde{\varrho})+p(\widetilde{\varrho})$ and $\partial_t p(\widetilde{\varrho})$. 
	
	Now, keeping in mind condition \eqref{pressure potential equation} and in particular the fact that $p'(\widetilde{\varrho})= \widetilde{\varrho} P''(\widetilde{\varrho})$, we can rewrite the last line as follows 
	\begin{align*}
	& \int_{0}^{\tau}\int_{\Omega} \left[ [p'(\widetilde{\varrho})-\varrho P''(\widetilde{\varrho})] \partial_t \widetilde{\varrho} +p'(\widetilde{\varrho})(\widetilde{\varrho}-\varrho) \divv_x \widetilde{\textbf{u}} +[p'(\widetilde{\varrho})-\varrho P''(\widetilde{\varrho})] \nabla_x \widetilde{\varrho} \cdot \widetilde{\textbf{u}} \right] \textup{d}x \textup{d}t \\
	=& \int_{0}^{\tau}\int_{\Omega} p'(\widetilde{\varrho}) \left( 1- \frac{\varrho}{\widetilde{\varrho}}\right) [\partial_t \widetilde{\varrho} + \widetilde{\varrho} \divv_x \widetilde{\textbf{u}} + \nabla_x \widetilde{\varrho} \cdot \widetilde{\textbf{u}}]  \  \textup{d}x \textup{d}t \\
	=& \int_{0}^{\tau}\int_{\Omega} p'(\widetilde{\varrho}) \left( 1- \frac{\varrho}{\widetilde{\varrho}}\right) [\partial_t \widetilde{\varrho} + \divv_x (\widetilde{\varrho} \widetilde{\textbf{u}})]  \  \textup{d}x \textup{d}t.
	\end{align*}
	We finally got \eqref{relative energy inequality}.
\end{proof}

\section{Weak-strong uniqueness} \label{Weak-strong uniqueness}
In this section our goal is to prove that a dissipative weak solution coincides with the strong one emanating from the same initial data, as long as the latter exists. The strategy consists in showing, through a standard Gronwall argument and relying on the relative energy inequality \eqref{relative energy inequality}, that $	\mathcal{E}(\varrho,\textup{\textbf{u}} \ | \ \widetilde{\varrho}, \widetilde{\textup{\textbf{u}}}) (\tau)$ vanishes for any $\tau \in [0,T]$, where $(\varrho, \textbf{u})$ and $(\widetilde{\varrho}, \widetilde{\textbf{u}})$ denote the weak and strong solutions, respectively. One difficulty is represented by the fact the $\widetilde{\varrho}=0$, at least in the far field, and therefore we cannot simply plug it in \eqref{relative energy inequality}. We can then follow the same idea developed by Feireisl and Novotn\'{y} \cite{FeiNov}, working with the couple $(\widetilde{\varrho}+\varepsilon, \widetilde{\textbf{u}})$, $\varepsilon>0$, and performing the limit $\varepsilon \rightarrow 0$. While in \cite{FeiNov} the result was proved for $1<\gamma \leq 2$, the presence of the gravitational potential in this context reduces the interval to $\gamma_1 \leq \gamma \leq 2$, with $\gamma_1=\frac{9}{5}$. In order to prove the main result of this section, we need the following lemma.

\begin{lemma} \label{estimate gradient potential}
	Let $\Phi$ be the solution of the Laplace equation
	\begin{equation*}
	-\Delta_x \Phi = f \quad \mbox{on } \mathbb{R}^3.
	\end{equation*}
	If $f \in L^p(\mathbb{R}^3)$, $1<p<3$, then
	\begin{equation} \label{bound gradient potential}
	\| \nabla_x \Phi \|_{L^{p^*}(\mathbb{R}^3; \mathbb{R}^3)} \leq c \| f\|_{L^p(\mathbb{R}^3)}
	\end{equation}
	with $c=c(p)$ a positive constant and $p^*$ the Sobolev conjugate of $p$,
	\begin{equation*}
	\frac{1}{p^*} = \frac{1}{p}- \frac{1}{3}.
	\end{equation*}
\end{lemma}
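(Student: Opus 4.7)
The plan is to exploit the explicit representation of $\Phi$ as the Newtonian potential of $f$ on the whole space, reduce the gradient of $\Phi$ to a Riesz-type fractional integral of $|f|$, and then invoke the Hardy--Littlewood--Sobolev inequality.

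First, I would recall that among the solutions of $-\Delta_x \Phi = f$ on $\mathbb{R}^3$ decaying at infinity, $\Phi$ is uniquely given by $\Phi = \Gamma * f$ with $\Gamma(x) = (4\pi|x|)^{-1}$ (the same fundamental solution already written down in Section \ref{The system}). Differentiating under the convolution and estimating pointwise yields
\[
|\nabla_x \Phi(x)| \;\leq\; \frac{1}{4\pi}\int_{\mathbb{R}^3} \frac{|f(y)|}{|x-y|^2}\,\textup{d}y \;=\; c\,(I_1|f|)(x),
\]
where $I_1$ denotes the Riesz potential of order $1$ in $\mathbb{R}^3$, whose convolution kernel is (up to a normalization constant) $|x|^{-2}$.

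Next, I would apply the Hardy--Littlewood--Sobolev inequality: for $1 < p < 3$ and $\tfrac{1}{p^*} = \tfrac{1}{p}-\tfrac{1}{3}$, the Riesz potential $I_1$ is bounded from $L^p(\mathbb{R}^3)$ into $L^{p^*}(\mathbb{R}^3)$. Combined with the pointwise bound, this gives
\[
\|\nabla_x \Phi\|_{L^{p^*}(\mathbb{R}^3;\mathbb{R}^3)} \;\leq\; c\,\|I_1|f|\|_{L^{p^*}(\mathbb{R}^3)} \;\leq\; c\,\|f\|_{L^p(\mathbb{R}^3)},
\]
which is exactly \eqref{bound gradient potential}.

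A variant more in the spirit of the H\"{o}rmander--Mikhlin approach advertised in the introduction is to factor $\nabla_x (-\Delta_x)^{-1} = R \circ (-\Delta_x)^{-1/2}$, where $R = \nabla_x(-\Delta_x)^{-1/2}$ is the vector of Riesz transforms, bounded on every $L^q(\mathbb{R}^3)$ for $1<q<\infty$ by the H\"{o}rmander--Mikhlin multiplier theorem, and $(-\Delta_x)^{-1/2} = c\,I_1$ sends $L^p$ into $L^{p^*}$ by Hardy--Littlewood--Sobolev. The restriction $1 < p < 3$ is precisely the admissible range of $I_1$ in three dimensions; the endpoints fail by standard counterexamples, so no substantive obstacle is expected. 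Since the argument is entirely classical, I would expect the only delicate point to be emphasizing why the constant $c$ depends only on $p$ (scaling of the Riesz kernel), which follows from the HLS constant.
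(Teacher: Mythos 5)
Your proof is correct, but it takes a genuinely different route from the paper's. The paper first applies the homogeneous Sobolev embedding $\|\nabla_x \Phi\|_{L^{p^*}} \lesssim \|\nabla_x^2 \Phi\|_{L^p}$ and then bounds the Hessian by the H\"{o}rmander--Mikhlin theorem applied to the second-order Calder\'{o}n--Zygmund operator $\nabla_x(-\Delta_x)^{-1}\nabla_x$, which has Fourier multiplier $-\xi_j\xi_k/|\xi|^2$. You instead pass directly to the convolution representation $\nabla_x\Phi = (\nabla_x \Gamma)*f$, dominate pointwise by the Riesz potential $I_1|f|$, and invoke Hardy--Littlewood--Sobolev, which hands you the $L^p \to L^{p^*}$ bound in a single step without any multiplier theorem. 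The two approaches localize the constraint $1<p<3$ differently: for the paper it is the range of the Sobolev embedding $\dot D^{1,p}(\mathbb{R}^3)\hookrightarrow L^{p^*}(\mathbb{R}^3)$, whereas for you it is the admissible range of $I_1$ in dimension three; these are of course the same restriction in disguise. Your approach is arguably more economical here, whereas the paper's has the advantage that the boundedness of $\nabla_x(-\Delta_x)^{-1}\nabla_x$ --- which you only invoke for the Riesz transforms in your secondary factorization --- is in any case needed and re-used heavily in Section~\ref{Pressure estimates up to the boundary}, so proving it once in Lemma~\ref{estimate gradient potential} serves a double purpose. One small point worth flagging in your write-up: for $1<p\leq 3/2$ the Newtonian potential $\Gamma * f$ itself need not be an a.e.\ finite $L^s$-function (weak Young fails for $\Phi$ in that range), so it is cleaner to \emph{define} $\nabla_x\Phi := (\nabla_x\Gamma)*f$, which is well-defined for all $1<p<3$, rather than to differentiate $\Gamma*f$; this is a presentational nuance, not a gap.
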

\begin{proof}
	Writing $\Phi= (-\Delta_x)^{-1}(f)$, from Sobolev's inequality we have that
	\begin{equation*}
	\| \nabla_x \Phi \|_{L^{p^*}(\mathbb{R}^3; \mathbb{R}^3)} = \left\| \nabla_x (-\Delta_x)^{-1}(f) \right\|_{L^{p^*}(\mathbb{R}^3; \mathbb{R}^3)} \leq c \left\| \nabla_x(-\Delta_x)^{-1}\nabla_x (f) \right\|_{L^{p}(\mathbb{R}^3; \mathbb{R}^{3\times 3})}.
	\end{equation*}
	Now, let $\mathcal{F}$,$\mathcal{F}^{-1}$ denote the Fourier transform and its inverse, respectively. We can write
	\begin{equation*}
	\nabla_x(-\Delta_x)^{-1} \nabla_x (f) = \left[ \frac{\partial^2}{\partial x_j \partial x_k} \mathcal{F}^{-1} \left( \frac{1}{|\xi|^2} \mathcal{F}(f) \right) \right]_{j,k=1}^{3} = \left[ \mathcal{F}^{-1} \left( -\frac{\xi_j \xi_k}{|\xi|^2} \mathcal{F}(f) \right) \right]_{j,k=1}^{3}.
	\end{equation*}
	It is easy to show that the multipliers $m_{jk}(\xi)= - \frac{\xi_j \xi_k }{|\xi|^2}$, $j,k=1,\dots,3$, satisfy the hypothesis of the H\"{o}rmander-Mikhlin Theorem (see \cite{Ste}, Chapter 4, Theorem 3). Hence the pseudo-differential operator $ \nabla_x(-\Delta_x)^{-1} \nabla_x $ is a bounded linear operator on $L^p(\mathbb{R})$ for any $1<p<\infty$ and therefore 
	\begin{equation} \label{boundedness singular operator}
	\left\| \nabla_x (-\Delta_x)^{-1} \nabla_x\ (f) \right\|_{L^{p}(\mathbb{R}^3; \mathbb{R}^{3\times 3})} \leq \| f\|_{L^p(\mathbb{R}^3)},
	\end{equation} 
	implying \eqref{bound gradient potential}.
\end{proof}

\begin{theorem} \label{weak-strong uniqueness}
	Let 
	\begin{equation} \label{condition gamma}
	\frac{9}{5} \leq \gamma \leq 2
	\end{equation}
	and let $(\varrho, \textbf{\textup{u}})$ be a dissipative weak solution of problem \eqref{continuity equation}--\eqref{initial conditions} in the sense of Definition \ref{dissipative solution}. Let $(\widetilde{\varrho}, \widetilde{\textup{\textbf{u}}})$ be a strong solution of the same problem such that
	\begin{equation} \label{regularity class}
		\begin{aligned}
			\widetilde{\textup{\textbf{u}}} &\in C([0,T]; D_0^{1,2} \cap D^{3,2}(\Omega; \mathbb{R}^3)) \cap L^2(0,T; D^{4,2}(\Omega; \mathbb{R}^3)) \\
			\partial_t \widetilde{\textup{\textbf{u}}} &\in L^{\infty} (0,T; D^{1,2}_0(\Omega; \mathbb{R}^3)) \cap L^2(0,T; D^{2,2}(\Omega; \mathbb{R}^3)), \\
			\widetilde{\varrho}, \ p(\widetilde{\varrho}) &\in C([0,T]; W^{3,2}(\Omega)).
		\end{aligned}
	\end{equation}
	Moreover, suppose that
	\begin{equation*} 
	\varrho(0, \cdot)=\widetilde{\varrho}(0,\cdot)=\varrho_0, \quad (\varrho\textup{\textbf{u}})(0,\cdot)= (\widetilde{\varrho} \widetilde{\textup{\textbf{u}}})(0,\cdot)= \textbf{\textup{m}}_0, 
	\end{equation*}
	where 
	\begin{equation} \label{initial density}
	\varrho_0 \in C_c(\overline{\Omega}), \quad (\varrho_0)^{\gamma-1} \in W^{1,6}\cap W^{1,\infty}(\Omega; \mathbb{R}^3).
	\end{equation}
	Then 
	\begin{equation} \label{equality weak and strong}
	\varrho\equiv \widetilde{\varrho}, \quad \textbf{\textup{u}} \equiv \widetilde{\textup{\textbf{u}}} \quad \mbox{in } (0,T) \times \Omega.
	\end{equation}
\end{theorem}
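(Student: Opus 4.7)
The plan is to specialize the relative energy inequality \eqref{relative energy inequality} to the regularized strong solution $[\widetilde{\varrho}+\varepsilon,\widetilde{\textbf{u}}]$ with $\varepsilon>0$, use the strong-form equations to rewrite the right-hand side as $C\mathcal{E}(\tau) + $ a term absorbable into Korn's dissipation, and conclude via Gronwall after sending $\varepsilon\to 0$. Working with $\widetilde{\varrho}+\varepsilon$ in place of $\widetilde{\varrho}$ removes the degeneracy of $P'(\widetilde{\varrho})$ where the strong density vanishes in the far field, following Feireisl--Novotn\'{y} \cite{FeiNov}; the fact that $\widetilde{\textbf{u}}$ is not compactly supported is handled by a standard density argument (cf.\ Remark~\ref{less regularity test functions}) using the regularity class \eqref{regularity class}.

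Since $p'(\widetilde{\varrho})=\widetilde{\varrho}P''(\widetilde{\varrho})$, the strong momentum equation gives
\begin{equation*}
\partial_t\widetilde{\textbf{u}}+\nabla_x\widetilde{\textbf{u}}\cdot\widetilde{\textbf{u}}+\nabla_x P'(\widetilde{\varrho})=\frac{1}{\widetilde{\varrho}}\divv_x\mathbb{S}(\nabla_x\widetilde{\textbf{u}})+G\nabla_x\widetilde{\Phi},
\end{equation*}
and the strong continuity equation reduces the fourth line of \eqref{relative energy inequality} to an $O(\varepsilon)$ remainder coming from $\varepsilon\,\divv_x\widetilde{\textbf{u}}$. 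The convective term $\int\varrho(\textbf{u}-\widetilde{\textbf{u}})\cdot\nabla_x\widetilde{\textbf{u}}\cdot(\textbf{u}-\widetilde{\textbf{u}})$ is bounded by $\|\nabla_x\widetilde{\textbf{u}}\|_{L^\infty}\mathcal{E}$; the relative pressure term is controlled by $\|\divv_x\widetilde{\textbf{u}}\|_{L^\infty}\mathcal{E}$ through the convexity lower bounds of $P(\varrho)-P'(\widetilde{\varrho})(\varrho-\widetilde{\varrho})-P(\widetilde{\varrho})$ via the essential/residual decomposition of Feireisl--Novotn\'{y}; and the viscous cross-term involving $\widetilde{\varrho}^{-1}\divv_x\mathbb{S}(\nabla_x\widetilde{\textbf{u}})$ is absorbed into the Korn coercivity on $\textbf{u}-\widetilde{\textbf{u}}$ via Young's inequality. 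The two gravitational contributions collapse into
\begin{equation*}
G\int_0^\tau\!\int_\Omega \varrho(\textbf{u}-\widetilde{\textbf{u}})\cdot\nabla_x(\Phi-\widetilde{\Phi})\,\textup{d}x\,\textup{d}t,
\end{equation*}
the only genuinely new contribution with respect to \cite{FeiNov}.

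The core of the argument is estimating this gravitational term. Since $-\Delta_x(\Phi-\widetilde{\Phi})=\varrho-\widetilde{\varrho}$ on $\mathbb{R}^3$, Cauchy--Schwarz and H\"{o}lder yield
\begin{equation*}
\left|\int_\Omega\varrho(\textbf{u}-\widetilde{\textbf{u}})\cdot\nabla_x(\Phi-\widetilde{\Phi})\,\textup{d}x\right|\leq \|\sqrt{\varrho}(\textbf{u}-\widetilde{\textbf{u}})\|_{L^2}\,\|\varrho\|_{L^\gamma}^{1/2}\,\|\nabla_x(\Phi-\widetilde{\Phi})\|_{L^{2\gamma/(\gamma-1)}},
\end{equation*}
and Lemma~\ref{estimate gradient potential} applied with $p=6\gamma/(5\gamma-3)$, whose Sobolev conjugate is $p^*=2\gamma/(\gamma-1)$, bounds the last factor by $c\|\varrho-\widetilde{\varrho}\|_{L^p}$. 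The requirement $p\leq\gamma$, needed so that the $L^1\cap L^\gamma$ control on the density yields an $L^p$ estimate (via interpolation on the residual set and finite measure of the essential set), is exactly equivalent to $\gamma\geq 9/5$; this is where the lower bound in \eqref{condition gamma} is forced. Combining this with $\|\sqrt{\varrho}(\textbf{u}-\widetilde{\textbf{u}})\|_{L^2}^2\leq 2\mathcal{E}(\tau)$ and the convexity lower bounds of the relative pressure potential (which imply $\|\varrho-\widetilde{\varrho}\|_{L^p}\leq C\mathcal{E}(\tau)^{1/2}$ through the essential/residual decomposition), the gravitational term is controlled by $C\mathcal{E}(\tau)$ modulo a quantity absorbable into the Korn dissipation. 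Passing to the limit $\varepsilon\to 0$ using \eqref{regularity class} and \eqref{initial density}, and invoking Gronwall's lemma together with $\mathcal{E}(0)=0$, one obtains $\mathcal{E}(\tau)\equiv 0$ on $[0,T]$; strict convexity of $P$ then gives $\varrho\equiv\widetilde{\varrho}$, and hence $\textbf{u}\equiv\widetilde{\textbf{u}}$ on $\{\varrho>0\}$, establishing \eqref{equality weak and strong}. The main obstacle is precisely this gravitational coupling: the nonlocal operator $\nabla_x(-\Delta_x)^{-1}\nabla_x$ ties the two densities together through a rigid H\"{o}lder chain that breaks as soon as $\gamma<9/5$, which is exactly the situation that motivates the pressure estimates ``up to the boundary'' developed later.
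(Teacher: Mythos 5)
Your overall strategy (regularizing by $\widetilde{\varrho}+\varepsilon$, reducing to the gravitational term, invoking Lemma~\ref{estimate gradient potential}, and closing with Gronwall) is the right one and matches the paper's framework. However, your treatment of the gravitational term has a genuine gap that prevents the argument from closing on the full range $\gamma\in[9/5,2]$.

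You estimate
\begin{equation*}
\left|\int_\Omega\varrho(\textbf{u}-\widetilde{\textbf{u}})\cdot\nabla_x(\Phi-\widetilde{\Phi})\,\textup{d}x\right|\leq \|\sqrt{\varrho}(\textbf{u}-\widetilde{\textbf{u}})\|_{L^2}\,\|\varrho\|_{L^\gamma}^{1/2}\,\|\nabla_x(\Phi-\widetilde{\Phi})\|_{L^{2\gamma/(\gamma-1)}}
\end{equation*}
and then apply Lemma~\ref{estimate gradient potential} with a \emph{single} exponent $p=6\gamma/(5\gamma-3)$ to the \emph{entire} difference $\varrho-\widetilde{\varrho}$, claiming $\|\varrho-\widetilde{\varrho}\|_{L^p}\lesssim\mathcal{E}^{1/2}$ via ``finite measure of the essential set.'' That last claim is false on an exterior domain with vacuum at infinity: the essential set $\{\varrho\leq\bar\varrho\}$ has \emph{infinite} measure, and on the far-field part of it (where $\widetilde{\varrho}=0$) the convexity bound only gives $E(\varrho\,|\,0)=P(\varrho)\sim\varrho^\gamma$, hence $\|\varrho\|_{L^\gamma}\lesssim\mathcal{E}^{1/\gamma}$ but no quantitative $L^p$ bound for $p<\gamma$. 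Testing with $\varrho\sim\varepsilon$ on a set of measure $\sim\varepsilon^{-1}$ shows that $\|\varrho\|_{L^p}\lesssim\mathcal{E}^{1/2}$ requires $p\geq 2/(3-\gamma)$; with your $p=6\gamma/(5\gamma-3)$ this reduces to $3\gamma^2-4\gamma-3\leq 0$, i.e.\ $\gamma\leq(2+\sqrt{13})/3\approx 1.87$. So your chain fails for $\gamma$ close to $2$, and there is no Gronwall.

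The fix — and what the paper actually does — is to first use Young's inequality to isolate $\tfrac12\int\varrho|\nabla_x(-\Delta_x)^{-1}(\varrho-\widetilde{\varrho})|^2$, then split the density argument \emph{inside} the operator, writing $\varrho-\widetilde{\varrho}=\mathbbm{1}_{\varrho\geq\bar\varrho}(\varrho-\widetilde{\varrho})+\mathbbm{1}_{\varrho\leq\bar\varrho}(\varrho-\widetilde{\varrho})$, and apply Lemma~\ref{estimate gradient potential} with \emph{different} exponents matched to the two convexity regimes: $L^\gamma\to L^{\gamma^*}$ on the residual part (where $E\gtrsim\varrho^\gamma$ gives $\|\mathbbm{1}_{\varrho\geq\bar\varrho}(\varrho-\widetilde{\varrho})\|_{L^\gamma}^2\lesssim\mathcal{E}$ since $\gamma\leq 2$), and $L^2\to L^6$ on the essential part (where strict convexity of $P$ on $[0,\bar\varrho]$ gives $\|\mathbbm{1}_{\varrho\leq\bar\varrho}(\varrho-\widetilde{\varrho})\|_{L^2}^2\lesssim\mathcal{E}$). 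Each piece is then \emph{directly} bounded by $\mathcal{E}$, with no interpolation loss. The constraints from the two H\"{o}lder conjugate weights ($\|\varrho\|_{L^q}$ with $q=3\gamma/(5\gamma-6)$ and $\|\varrho\|_{L^{3/2}}$) are what produce $\gamma\geq 9/5$, not a requirement $p\leq\gamma$ on a single exponent. Your identification of $9/5$ is correct in spirit, but the single-exponent argument you wrote down does not actually establish the theorem on $(\approx 1.87,\,2]$.
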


\begin{remark}
	We point out that the regularity class \eqref{regularity class} chosen for $(\widetilde{\varrho}, \widetilde{\textbf{u}})$ is the one introduced by Huang, Li and Xin \cite{HuaLiXin}. Moreover, the two conditions in \eqref{initial density} for the initial density $\varrho_0$ guarantee 
	\begin{align}
		\widetilde{\varrho}(t,x)=0& \quad \mbox{for any } t\in [0,T] \mbox{ and } |x|>R, \label{compactly supported density} \\
		\sup_{t\in(0,T)}&\| \nabla_x(\widetilde{\varrho}^{\ \gamma-1})\|_{L^6\cap L^{\infty}(\Omega; \mathbb{R}^3)} \lesssim 1, \label{regularity gradient pressure potential}
	\end{align}
	respectively, for $R>0$ sufficiently large. Indeed, from Lemma 2.1 in \cite{FeiNov} we deduce that if the velocity field $\widetilde{\textbf{u}}$ is smooth enough, the regularity in \eqref{initial density} will propagate in time. Conditions \eqref{compactly supported density}, \eqref{regularity gradient pressure potential}, on the other hand, are fundamental in providing a proper bound for the term
	\begin{equation*}
		\partial_t \widetilde{\textbf{u}} + \nabla_x \widetilde{\textbf{u}} \cdot \widetilde{\textbf{u}} + \nabla_x P'(\widetilde{\varrho})- G \nabla_x \widetilde{\Phi};
	\end{equation*}
	see Sections 5.1 and 5.2 in \cite{FeiNov} for further details.
\end{remark}

\begin{proof}
	As we cannot plug $\widetilde{\varrho}$ in \eqref{relative energy inequality} since condition $\inf \widetilde{\varrho} >0$ is necessary, we can plug in $\widetilde{\varrho}+ \varepsilon$ to get
	\begin{equation*}
	\begin{aligned}
	\int_{\Omega} E(\varrho, \textbf{u} \ | \ \widetilde{\varrho}+\varepsilon, \widetilde{\textbf{u}}) (\tau, \cdot) \  \textup{d}x  &+ \int_{0}^{\tau} \int_{\Omega} \mathbb{S}(\nabla_x \textbf{u}):\nabla_x (\textbf{u}-\widetilde{\textbf{u}}) \ \textup{d}x \textup{d}t \\
	&\leq \int_{\Omega} \left( P(\varrho_0) +\varepsilon P'(\varrho_0 +\varepsilon) -P(\varrho_0+\varepsilon)\right) \textup{d}x\\
	&+ \int_{0}^{\tau} \int_{\Omega}  \varrho(\widetilde{\textbf{u}}-\textbf{u})  \cdot[\partial_t \widetilde{\textbf{u}} + \nabla_x \widetilde{\textbf{u}} \cdot \widetilde{\textbf{u}} + \nabla_x P'(\widetilde{\varrho}+\varepsilon)] \ \textup{d}x \textup{d}t \\
	&- \int_{0}^{\tau} \int_{\Omega} \varrho (\widetilde{\textbf{u}} - \textbf{u}) \cdot \nabla_x \widetilde{\textbf{u}} \cdot (\widetilde{\textbf{u}}- \textbf{u}) \ \textup{d}x \textup{d}t \\
	&-\int_{0}^{\tau} \int_{\Omega}  [p(\varrho)-p'(\widetilde{\varrho}+\varepsilon)(\varrho-\widetilde{\varrho}-\varepsilon)-p(\widetilde{\varrho} +\varepsilon)] \divv_x \widetilde{\textbf{u}} \ \textup{d}x \textup{d}t \\
	& + G\int_{0}^{\tau}\int_{\Omega} \varrho (\textbf{u}-\widetilde{\textbf{u}}) \cdot \nabla_x \Phi \  \textup{d}x \textup{d}t \\
	& + \varepsilon\int_{0}^{\tau}\int_{\Omega} p'(\widetilde{\varrho}+\varepsilon) \left( 1- \frac{\varrho}{\widetilde{\varrho}+\varepsilon}\right)  \divv_x  \widetilde{\textbf{u}}  \  \textup{d}x \textup{d}t.
	\end{aligned}
	\end{equation*}
	Repeating the same passages done in \cite{FeiNov} and performing the limit $\varepsilon \rightarrow 0$ we obtain
	\begin{equation*}
	\begin{aligned}
	\int_{\Omega} E(\varrho, \textbf{u} \ | \ \widetilde{\varrho}, \widetilde{\textbf{u}}) (\tau, \cdot) \  \textup{d}x & + \int_{0}^{\tau} \int_{\Omega} \mathbb{S}(\nabla_x \textbf{u}):\nabla_x (\textbf{u}-\widetilde{\textbf{u}}) \ \textup{d}x \textup{d}t  \\
	& \leq \int_{0}^{\tau}  \int_{\Omega} E(\varrho, \textbf{u} \ | \ \widetilde{\varrho}, \widetilde{\textbf{u}})  \  \textup{d}x \textup{d}t \\
	&+ \int_{0}^{\tau} \int_{\Omega}  \varrho(\widetilde{\textbf{u}}-\textbf{u})  \cdot[\partial_t \widetilde{\textbf{u}} + \nabla_x \widetilde{\textbf{u}} \cdot \widetilde{\textbf{u}} + \nabla_x P'(\widetilde{\varrho})-G \nabla_x \widetilde{\Phi}] \ \textup{d}x \textup{d}t \\
	&+G  \int_{0}^{\tau} \int_{\Omega} \varrho (\textbf{u}-\widetilde{\textbf{u}}) \cdot \nabla_x (\Phi -\widetilde{\Phi})\ \textup{d}x \textup{d}t
	\end{aligned}
	\end{equation*}
	As the couple $[\widetilde{\varrho}, \widetilde{\textbf{u}}]$ is a strong solution of our problem, it satisfies
	\begin{equation*}
	\widetilde{\varrho} \  [\partial_t \widetilde{\textbf{u}} + \nabla_x \widetilde{\textbf{u}} \cdot \widetilde{\textbf{u}} + \nabla_x P'(\widetilde{\varrho})- G\nabla_x \widetilde{\Phi}] = \divv_x \mathbb{S}(\nabla_x \widetilde{\textbf{u}}) 
	\end{equation*}
	and hence, we can add on both sides of the previous inequality the quantity
	\begin{align*}
	\int_{0}^{\tau} \int_{\Omega}  \widetilde{\varrho}\ (\textbf{u}-\widetilde{\textbf{u}})  \cdot[\partial_t \widetilde{\textbf{u}} + \nabla_x \widetilde{\textbf{u}} \cdot \widetilde{\textbf{u}} + \nabla_x P'(\widetilde{\varrho})- G \nabla_x \widetilde{\Phi}] \ \textup{d}x \textup{d}t =- \int_{0}^{\tau} \int_{\Omega} \mathbb{S}(\nabla_x \widetilde{\textbf{u}}):\nabla_x (\textbf{u}-\widetilde{\textbf{u}}) \ \textup{d}x \textup{d}t
	\end{align*}
	to get 
	\begin{equation*}
	\begin{aligned}
	\int_{\Omega} E(\varrho, \textbf{u} \ | \ \widetilde{\varrho}, \widetilde{\textbf{u}}) (\tau, \cdot) \  \textup{d}x & + \int_{0}^{\tau} \int_{\Omega} \mathbb{S}(\nabla_x (\textbf{u}-\widetilde{\textbf{u}})):\nabla_x (\textbf{u}-\widetilde{\textbf{u}}) \ \textup{d}x \textup{d}t\\ 
	& \leq \int_{0}^{\tau} \int_{\Omega} E(\varrho, \textbf{u} \ | \ \widetilde{\varrho}, \widetilde{\textbf{u}}) \  \textup{d}x \textup{d}t \\
	&+ \int_{0}^{\tau} \int_{\Omega}  (\varrho-\widetilde{\varrho})(\widetilde{\textbf{u}}-\textbf{u})  \cdot[\partial_t \widetilde{\textbf{u}} + \nabla_x \widetilde{\textbf{u}} \cdot \widetilde{\textbf{u}} + \nabla_x P'(\widetilde{\varrho}) -G \  \nabla_x \widetilde{\Phi}] \ \textup{d}x \textup{d}t \\
	&+G  \int_{0}^{\tau} \int_{\Omega}  \varrho (\textbf{u}-\widetilde{\textbf{u}}) \cdot \nabla_x (\Phi-\widetilde{\Phi}) \ \textup{d}x \textup{d}t
	\end{aligned}
	\end{equation*}
	From \eqref{viscous stress tensor} and keeping in mind that $\textbf{u}$ and $\widetilde{\textbf{u}}$ vanish on the boundary of $\Omega$, we have 
	\begin{align*}
	\int_{0}^{\tau} \int_{\Omega} & \mathbb{S}(\nabla_x (\textbf{u}-\widetilde{\textbf{u}})):\nabla_x (\textbf{u}-\widetilde{\textbf{u}}) \ \textup{d}x \textup{d}t \\ 
	&= \int_{0}^{\tau} \int_{\Omega} \left[ \mu |\nabla_x (\textbf{u}-\widetilde{\textbf{u}})|^2+ \left(\frac{1}{3}\mu +\lambda\right) |\divv_x (\textbf{u}-\widetilde{\textbf{u}})|^2 \right] \textup{d}x \textup{d}t \\
	&\geq \mu \int_{0}^{\tau} \int_{\Omega} |\nabla_x (\textbf{u}-\widetilde{\textbf{u}})|^2 \ \textup{d}x \textup{d}t
	\end{align*}
	and thus finally we can infer
	\begin{equation} \label{energy inequality 2}
	\begin{aligned}
	\int_{\Omega} E(\varrho, \textbf{u} \ | \ \widetilde{\varrho}, \widetilde{\textbf{u}}) (\tau, \cdot) \  \textup{d}x & + \int_{0}^{\tau} \int_{\Omega} |\nabla_x (\textbf{u}-\widetilde{\textbf{u}})|^2 \ \textup{d}x \textup{d}t\\ 
	& \lesssim \int_{0}^{\tau} \int_{\Omega} E(\varrho, \textbf{u} \ | \ \widetilde{\varrho}, \widetilde{\textbf{u}}) \  \textup{d}x  \textup{d}t \\
	&+ \int_{0}^{\tau} \int_{\Omega}  	(\varrho-\widetilde{\varrho})(\widetilde{\textbf{u}}-\textbf{u})  \cdot[\partial_t \widetilde{\textbf{u}} + \nabla_x \widetilde{\textbf{u}} \cdot \widetilde{\textbf{u}} + \nabla_x P'(\widetilde{\varrho}) -G \  \nabla_x \widetilde{\Phi}] \ \textup{d}x \textup{d}t \\
	&+G  \int_{0}^{\tau} \int_{\Omega}  \varrho (\textbf{u}-\widetilde{\textbf{u}}) \cdot \nabla_x (\Phi-\widetilde{\Phi}) \ \textup{d}x \textup{d}t.
	\end{aligned}
	\end{equation}
	Since the hypothesis of Theorem 5.2 in \cite{FeiNov} are satisfied, we can repeat the same passages to obtain
	\begin{equation*}
	\int_{\Omega} (\varrho-\widetilde{\varrho})(\widetilde{\textbf{u}}-\textbf{u})  \cdot[\partial_t \widetilde{\textbf{u}} + \nabla_x \widetilde{\textbf{u}} \cdot \widetilde{\textbf{u}} + \nabla_x P'(\widetilde{\varrho}) -G \  \nabla_x \widetilde{\Phi}](t, \cdot) \ \textup{d}x \lesssim \int_{\Omega} E(\varrho, \textbf{u} \ | \ \widetilde{\varrho}, \widetilde{\textbf{u}}) (t, \cdot) \  \textup{d}x,
	\end{equation*}
	as soon as 
	\begin{equation} \label{condition gamma 1}
	1<\gamma \leq 2.
	\end{equation}
	
	It remains to control the last term in \eqref{energy inequality 2}. First of all, notice that $-\Delta_x(\Phi-\widetilde{\Phi})= \varrho-\widetilde{\varrho}$ and thus we get
	\begin{align*}
	\int_{\Omega}  \varrho &\ (\textbf{u}-\widetilde{\textbf{u}}) \cdot \nabla_x (\Phi-\widetilde{\Phi}) \ \textup{d}x \\
	&= \int_{\Omega} \varrho \ (\textbf{u}-\widetilde{\textbf{u}}) \cdot \nabla_x(-\Delta_x)^{-1}(\varrho-\widetilde{\varrho}) \ \textup{d}x \\ 
	&\leq \frac{1}{2} \int_{\Omega} \varrho |\textbf{u}-\widetilde{\textbf{u}}|^2 \ \textup{d}x+ \frac{1}{2} \int_{\Omega} \varrho |\nabla_x(-\Delta_x)^{-1}(\varrho-\widetilde{\varrho})|^2 \ \textup{d}x,
	\end{align*}
	where the first term of the right-hand side of the inequality can be controlled by the relative energy and therefore it remains to estimate the second term. Fix $\bar{\varrho}>0$ so that $\widetilde{\varrho} \leq \frac{1}{2} \bar{\varrho}$; then, we obtain the following inequality
	\begin{align*}
	\int_{\Omega} \varrho& |\nabla_x(-\Delta_x)^{-1}(\varrho-\widetilde{\varrho})|^2 \ \textup{d}x \\
	&\leq \int_{\Omega} \varrho | \nabla_x(-\Delta_x)^{-1} \big( \mathbbm{1}_{\varrho\geq \bar{\varrho}} (\varrho-\widetilde{\varrho}) \big) |^2 \ \textup{d}x +  \int_{\Omega} \varrho | \nabla_x(-\Delta_x)^{-1} \big( \mathbbm{1}_{\varrho\leq \bar{\varrho}} (\varrho-\widetilde{\varrho}) \big) |^2 \ \textup{d}x,
	\end{align*}
	
	On one hand, from H\"{o}lder's inequality we get
	\begin{equation*}
	\int_{\Omega} \varrho | \nabla_x(-\Delta_x)^{-1}\big( \mathbbm{1}_{\varrho\geq \bar{\varrho}} (\varrho-\widetilde{\varrho}) \big) |^2 \ \textup{d}x \leq \| \varrho \|_{L^q(\Omega)} \left\| \nabla_x(-\Delta_x)^{-1}\big( \mathbbm{1}_{\varrho\geq \bar{\varrho}} (\varrho-\widetilde{\varrho}) \big) \right\|_{L^{\gamma^*}(\Omega; \mathbb{R}^3)}^2,
	\end{equation*}
	with $\gamma^*$ is the Sobolev conjugate of $\gamma$ and
	\begin{equation} \label{esponent varrho}
	q= \frac{3\gamma}{5\gamma -6}.
	\end{equation}
	As $\varrho \in C_{\textup{weak}}([0,T]; L^1 \cap L^{\gamma}(\Omega))$, in order to guarantee that $\varrho(t,\cdot) \in L^q$, one should check that
	\begin{equation} \label{condition on the exponent}
	1\leq q=\frac{3\gamma}{5\gamma -6} \leq \gamma,
	\end{equation}
	which provides the restriction
	\begin{equation} \label{condition gamma 2}
	\frac{9}{5} \leq \gamma \leq 3.
	\end{equation}
	The combination of \eqref{condition gamma 1} and \eqref{condition gamma 2} justifies hypothesis \eqref{condition gamma}. We can now apply Lemma \ref{estimate gradient potential} and, in particular, from \eqref{bound gradient potential} we get 
	\begin{equation*}
	\left\| \nabla_x(-\Delta_x)^{-1}\big( \mathbbm{1}_{\varrho\geq \bar{\varrho}} (\varrho-\widetilde{\varrho}) \big) \right\|_{L^{\gamma^*}(\Omega; \mathbb{R}^3)}^2 \lesssim \| \mathbbm{1}_{\varrho\geq \bar{\varrho}} (\varrho-\widetilde{\varrho}) \|_{L^{\gamma}(\Omega)}^2
	\end{equation*}
	where  
	\begin{equation*}
	\| \mathbbm{1}_{\varrho\geq \bar{\varrho}} (\varrho-\widetilde{\varrho}) \|_{L^{\gamma}(\Omega)}^2= \| \mathbbm{1}_{\varrho\geq \bar{\varrho}} (\varrho-\widetilde{\varrho}) \|_{L^{\gamma}(\Omega)}^{2-\gamma}  \| \mathbbm{1}_{\varrho\geq \bar{\varrho}} (\varrho-\widetilde{\varrho}) \|_{L^{\gamma}(\Omega)}^{\gamma}\lesssim c(\bar{\varrho})\| \mathbbm{1}_{\varrho\geq \bar{\varrho}} (\varrho-\widetilde{\varrho}) \|_{L^{\gamma}(\Omega)}^{\gamma},
	\end{equation*}
	with $2-\gamma \geq 0$ from \eqref{condition gamma}. Hence, we may conclude that
	\begin{equation*}
	\int_{\Omega} \varrho | \nabla_x(-\Delta_x)^{-1}\big( \mathbbm{1}_{\varrho\geq \bar{\varrho}} (\varrho-\widetilde{\varrho}) \big) |^2 \ (t,\cdot) \ \textup{d}x \lesssim \int_{\Omega} E(\varrho, \textbf{u} \ | \ \widetilde{\varrho}, \widetilde{\textbf{u}}) (t, \cdot) \  \textup{d}x.
	\end{equation*}
	
	Similarly, on the other side, from H\"{o}lder's inequality and \eqref{bound gradient potential}, we obtain
	\begin{align*}
	\int_{\Omega} \varrho &\  | \nabla_x(-\Delta_x)^{-1}\big( \mathbbm{1}_{\varrho\leq \bar{\varrho}} (\varrho-\widetilde{\varrho}) \big) |^2  \ \textup{d}x\\
	& \leq \| \varrho \|_{L^{\frac{3}{2}}(\Omega)} \left\| \nabla_x(-\Delta_x)^{-1}\big( \mathbbm{1}_{\varrho\leq \bar{\varrho}} (\varrho-\widetilde{\varrho}) \big) \right\|_{L^6(\Omega; \mathbb{R}^3)}^2 \\
	&\lesssim \| \varrho \|_{L^{\frac{3}{2}}(\Omega)} \left\|  \mathbbm{1}_{\varrho\leq \bar{\varrho}} (\varrho-\widetilde{\varrho}) \right\|_{L^2(\Omega)}^2
	\end{align*}
	From the fact that the pressure potential $P$ is strictly convex on the interval $[0, \bar{\varrho}]$ we have
	\begin{equation*}
	\|  \mathbbm{1}_{\varrho\leq \bar{\varrho}} (\varrho-\widetilde{\varrho}) \|_{L^2(\Omega)}^2 \lesssim \int_{\Omega} E(\varrho, \textbf{u} \ | \ \widetilde{\varrho}, \widetilde{\textbf{u}})(t,\cdot) \  \textup{d}x.
	\end{equation*}
	Getting back to \eqref{energy inequality 2}, we finally obtain
	\begin{equation*}
	\int_{\Omega} E(\varrho, \textbf{u} \ | \ \widetilde{\varrho}, \widetilde{\textbf{u}}) (\tau, \cdot) \  \textup{d}x + \int_{0}^{\tau} \int_{\Omega} |\nabla_x (\textbf{u}-\widetilde{\textbf{u}})|^2 \ \textup{d}x \textup{d}t \lesssim \int_{0}^{\tau} \int_{\Omega} E(\varrho, \textbf{u} \ | \ \widetilde{\varrho}, \widetilde{\textbf{u}}) \  \textup{d}x  \textup{d}t,
	\end{equation*}
	and, as a consequence of Gronwall Lemma,
	\begin{align*}
	\mathcal{E}(\varrho, \textbf{u} \ | \ \widetilde{\varrho}, \widetilde{\textbf{u}})(\tau)=& \int_{\Omega} E(\varrho, \textbf{u} \ | \ \widetilde{\varrho}, \widetilde{\textbf{u}}) (\tau, \cdot) \  \textup{d}x =0 \quad \mbox{for any }\tau \in [0,T], \\
	&\int_{0}^{T} \int_{\Omega} |\nabla_x (\textbf{u}-\widetilde{\textbf{u}})|^2 \ \textup{d}x \textup{d}t=0,
	\end{align*}
	which, in particular, implies \eqref{equality weak and strong}.
\end{proof}

\section{Pressure estimate up to the boundary} \label{Pressure estimates up to the boundary}

The range of $\gamma$ in \eqref{condition gamma} can be enlarged finding some proper pressure estimates ``up to the boundary". The idea is to adapt the procedure performed by Feireisl and Petzeltov\'{a} in \cite{FeiPet} for a bounded domain $\Omega$ in the context of an exterior domain. More precisely, our goal is to prove the following result. 

\begin{theorem} \label{pressure estimate}
	Let $\Omega \subset \mathbb{R}^3$ be a Lipschitz exterior domain. Then, for any dissipative weak solution $(\varrho, \textbf{\textup{u}})$ of the Navier-Stokes-Poisson system \eqref{continuity equation}--\eqref{initial conditions} in the sense of Definition \ref{dissipative solution} there exists a positive constant $K$ such that
	\begin{equation} \label{pressure estimate up to the boundary}
	\int_{0}^{T} \int_{\Omega} \varrho^{\gamma+\omega} \ \textup{d}x \textup{d}t \leq K
	\end{equation}
	for any 
	\begin{equation} \label{condition on omega}
	0 <\omega \leq \frac{2}{3} \gamma-1.
	\end{equation}
\end{theorem}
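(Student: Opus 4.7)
The plan is to adapt the test--function argument of Feireisl and Petzeltov\'a in \cite{FeiPet} to the exterior Lipschitz setting. Fix a large ball $B_R$ with $\mathbb{R}^3\setminus\Omega \subset B_R$, so that $\Omega_R:= B_R\cap\Omega$ is a bounded Lipschitz domain, and let $\mathcal{B}_R$ be the associated Bogovskii operator: for any $f\in L^p(\Omega_R)$ with zero mean, $1<p<\infty$, the field $\mathcal{B}_R[f]$ satisfies $\divv_x\mathcal{B}_R[f]=f$, vanishes on $\partial\Omega_R$, and $\|\mathcal{B}_R[f]\|_{W^{1,p}_0(\Omega_R;\mathbb{R}^3)}\lesssim\|f\|_{L^p(\Omega_R)}$. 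In the weak momentum equation \eqref{weak formulation of the balance of momentum} I plug in the admissible test function
\begin{equation*}
\bm{\varphi}(t,x) = \psi(t)\,\mathcal{B}_R\!\left[\varrho^{\omega}(t,\cdot)-\tfrac{1}{|\Omega_R|}\int_{\Omega_R}\varrho^{\omega}(t,y)\,\textup{d}y\right](x),
\end{equation*}
extended by zero outside $\Omega_R$, where $\psi$ is a smooth time cut--off. Since $\bm{\varphi}$ vanishes on $\partial\Omega$, the admissibility in \eqref{weak formulation of the balance of momentum} is clear.

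By construction, $\divv_x\bm{\varphi}(t,x) = \psi(t)\bigl(\varrho^{\omega}(t,x)-\langle\varrho^{\omega}\rangle_{\Omega_R}(t)\bigr)$ on $\Omega_R$, so the pressure term in \eqref{weak formulation of the balance of momentum} yields, modulo a mean contribution controlled by $\varrho\in L^\infty_tL^{\gamma}_x$, exactly $\int_0^T\!\int_{\Omega_R}a\,\varrho^{\gamma+\omega}\,\textup{d}x\,\textup{d}t$. All other contributions, namely the convective term $\varrho\textbf{u}\otimes\textbf{u}:\nabla_x\bm{\varphi}$, the viscous term $\mathbb{S}(\nabla_x\textbf{u}):\nabla_x\bm{\varphi}$, the time--derivative piece $\varrho\textbf{u}\cdot\partial_t\bm{\varphi}$ and the gravitational one $G\varrho\nabla_x\Phi\cdot\bm{\varphi}$, are bounded via H\"older's inequality together with the continuity of $\mathcal{B}_R$. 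To treat $\partial_t\bm{\varphi}$, I would invoke the renormalized continuity equation \eqref{weak formulation renormalized continuity equation} with $B(z)=z^{\omega}$ (truncated for integrability) to rewrite $\partial_t\varrho^{\omega}$ in divergence form, which turns $\partial_t\bm{\varphi}$ into an expression of the form $\mathcal{B}_R[\divv_x(\varrho^{\omega}\textbf{u})+\cdots]$ whose $L^r_x$--norm is controlled via Bogovskii and the Sobolev embedding $\textbf{u}\in L^2_tL^{6}_x$ coming from \textit{(i)} in Definition \ref{dissipative solution}. A careful bookkeeping of the resulting H\"older indices delivers precisely the admissible range \eqref{condition on omega}.

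The genuinely new ingredient, absent from \cite{FeiPet}, is the gravitational contribution $G\int\varrho\nabla_x\Phi\cdot\bm{\varphi}$. I propose to handle it via Lemma \ref{estimate gradient potential}: since $\varrho+g\in L^p(\mathbb{R}^3)$ for every $p\in[1,\gamma]$, the gradient $\nabla_x\Phi$ lies in $L^{p^*}(\mathbb{R}^3;\mathbb{R}^3)$ for any such $p$ with $1<p<3$. Choosing $p$ optimally and combining with the $L^r$--bound on $\bm{\varphi}$ that follows from $\|\mathcal{B}_R[\varrho^{\omega}-\langle\varrho^{\omega}\rangle]\|_{L^r}\lesssim\|\varrho^{\omega}\|_{L^r}$ together with the a priori control $\varrho^{\omega}\in L^\infty_tL^{\gamma/\omega}_x$, this term fits inside the same window $\omega\leq\tfrac{2}{3}\gamma-1$ without forcing any further restriction.

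The main obstacle that I expect is making all the estimates uniform in $R$, in order to pass to the limit $R\to\infty$ and obtain a global bound on the unbounded domain $\Omega$. Since the operator norm of $\mathcal{B}_R$ on star--shaped Lipschitz domains scales with the geometry, one cannot naively let $R$ grow. I would therefore decompose $\Omega_R$ into a fixed Lipschitz neighbourhood of $\partial\Omega$, on which the classical Bogovskii operator applies with a constant independent of $R$, together with a large annular piece $B_R\setminus B_{R_0}$, on which the inverse divergence can be realized directly by $\nabla_x(-\Delta_x)^{-1}$ and the singular--integral bound \eqref{boundedness singular operator}. The residual contribution from $\Omega\setminus B_R$ is controlled by interpolation between the $L^1$ and $L^{\gamma}$ a priori bounds on $\varrho$ and vanishes as $R\to\infty$ thanks to the decay of the density at infinity, yielding \eqref{pressure estimate up to the boundary}.
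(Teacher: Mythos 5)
Your proposal is essentially a Bogovskii-operator variant of the inverse-divergence test function, whereas the paper works directly with the singular operator $\nabla_x(-\Delta_x)^{-1}$ applied to (a truncation of) $\varrho^{\omega}$, premultiplied by a \emph{fixed} spatial cut-off $\varphi$ that vanishes on $\Omega^c$ and equals $1$ outside a fixed ball $B_R$. In the paper's version there is no exhaustion by bounded domains and no limit $R\to\infty$: the ball $B_R$ with $R>\mathrm{diam}(\Omega^c)$ is chosen once, the support of $\nabla_x\varphi$ is compact, and all constants come from the $L^p$-boundedness of $\nabla_x(-\Delta_x)^{-1}\nabla_x$ on $\mathbb{R}^3$ (H\"ormander--Mikhlin, as in Lemma \ref{estimate gradient potential}), which is $R$-independent by construction. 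The divergence of the paper's test function decomposes as $\psi\varphi\,b(\varrho)$ plus a lower-order term supported on $\supp(\nabla_x\varphi)$, giving the good term $\int\psi\varphi\,p(\varrho)b(\varrho)$ directly. This avoids the mean-correction you need for Bogovskii, and the concluding ``let $\psi,\varphi\to 1$'' is harmless because all the bounds are uniform.

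The genuine gap in your plan is precisely the step you flag as a worry. First, the standard $W^{1,p}$-estimate for $\mathcal{B}_R$ on $\Omega_R=\Omega\cap B_R$ comes with a constant that depends on the Lipschitz geometry and on $R$ through the John/star-shaped decomposition, and it is not established here that it stays bounded as $R\to\infty$. Second, and more seriously, when you rewrite $\partial_t\varrho^{\omega}$ via the renormalized continuity equation you produce a term of the form $\mathcal{B}_R[\divv_x(\varrho^{\omega}\textbf{u})]$, and to close the estimate one needs the ``negative-order'' Bogovskii bound $\|\mathcal{B}_R[\divv_x\textbf{g}]\|_{L^r}\lesssim\|\textbf{g}\|_{L^r}$. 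The paper's own Remark following Theorem \ref{pressure estimate} points out that this estimate is available on bounded domains (Galdi, Thm.~III.3.4) but, to the author's knowledge, not on unbounded Lipschitz exterior domains; on the exhausting sequence $\Omega_R$ you would again need it with constants uniform in $R$, which is not addressed. Your final ``hybrid'' decomposition (classical Bogovskii near $\partial\Omega$, singular integral on the annulus) is in spirit what the paper actually does via the fixed cut-off $\varphi$, but as written it is a sketch of an idea rather than a construction: one must say how the two partial inverse-divergences are glued so that $\divv_x\bm{\varphi}=\psi(\varrho^{\omega}-\text{mean})$ exactly, without generating boundary terms on the interface $\partial B_{R_0}$ that reintroduce the $R$-dependence. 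Until that gluing and the uniformity of the negative-order estimate are pinned down, the argument does not close.
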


\begin{proof}
	First of all, fix $R >  \textup{diam}(\Omega^c)$, and define $\varphi \in C^1(\mathbb{R}^3)$ such that 
	\begin{equation*}
		\varphi(x)= \begin{cases}
			0 &\mbox{if } x \in \Omega^c, \\
			1 &\mbox{if } x \in B_R^c,
		\end{cases}
	\end{equation*}
	where $A^c:= \mathbb{R}^3 \setminus A$ denotes the complementary of a set $A$ while $B_R$ denotes the ball of radius $R$ and center the origin. Let us now consider the function
	\begin{equation*}
		\bm{\varphi}(t,x) = \psi(t) \varphi(x) \ \nabla_x (-\Delta_x)^{-1} \big[ b\big( \varrho(t,\cdot) \big)\big](x),
	\end{equation*}
	where $\psi \in C_c^1(0,T)$ and 
	\begin{equation*}
	b\in C^1(\mathbb{R}), \quad b(0)=0, \quad b(z)=z^{\omega} \ \mbox{for } z\geq 1.
	\end{equation*}
	Keeping in mind Remark \ref{less regularity test functions}, we can now use $\bm{\varphi}$ as test function in \eqref{weak formulation of the balance of momentum}; indeed, we obtain
	\begin{equation*}
		\int_{\mathbb{R}}\int_{\mathbb{R}^3} \psi \ \varphi \ p(\varrho) \ b(\varrho) \ \textup{d}x \textup{d}t= \sum_{k=1}^{9} I_k
	\end{equation*}
	with 
	\begin{align*}
		I_1 &= 	-\int_{\mathbb{R}} \int_{\mathbb{R}^3}  \psi' \  \varphi \ \varrho \textbf{u} \cdot \nabla_x (-\Delta_x)^{-1} \big[ b(\varrho) \big] \ \textup{d}x \textup{d}t, \\
		I_2&=+  \int_{\mathbb{R}} \int_{\mathbb{R}^3} \psi \ \varphi \ \varrho \textbf{u} \cdot \big(\nabla_x (-\Delta_x)^{-1} \divv_x\big) \big[ \big(b(\varrho) \textbf{u}\big) \big]  \ \textup{d}x\textup{d}t,  \\
		I_3&= +\int_{\mathbb{R}} \int_{\mathbb{R}^3} \psi \ \varphi \ \varrho \textbf{u} \cdot \nabla_x (-\Delta_x)^{-1} \big[ \big( b(\varrho)-\varrho b'(\varrho)\big) \divv_x \textbf{u} \big]  \ \textup{d}x\textup{d}t, \\
		I_4 &= - \int_{\mathbb{R}}\int_{\mathbb{R}^3} \psi \ [(\varrho \textbf{u} \otimes \textbf{u}) \nabla_x \varphi] \cdot \nabla_x (-\Delta_x)^{-1} \big[ b(\varrho) \big] \ \textup{d}x\textup{d}t, \\
		I_5 &= - \int_{\mathbb{R}} \int_{\mathbb{R}^3} \psi \ \varphi \ (\varrho \textbf{u} \otimes \textbf{u}) : \big(\nabla_x (-\Delta_x)^{-1} \nabla_x\big)	\big[ b(\varrho) \big] \ \textup{d}x\textup{d}t, \\
		I_6 &= - \int_{\mathbb{R}}\int_{\mathbb{R}^3} \psi \ p(\varrho) \ \nabla_x \varphi \cdot \nabla_x (-\Delta_x)^{-1} \big[b(\varrho)\big] \ \textup{d}x \textup{d}t, \\
		I_7 &= + \int_{\mathbb{R}}\int_{\mathbb{R}^3} \psi  \  [\mathbb{S}(\nabla_x \textbf{u})\nabla_x \varphi]  \cdot \nabla_x (-\Delta_x)^{-1} \big[ b(\varrho) \big] \ \textup{d}x\textup{d}t, \\
		I_8 &= + \int_{\mathbb{R}}\int_{\mathbb{R}^3} \psi \ \varphi \  \mathbb{S}(\nabla_x \textbf{u}): \big(\nabla_x (-\Delta_x)^{-1} \nabla_x\big) \big[ b(\varrho) \big] \ \textup{d}x\textup{d}t, \\
		I_9 &= - G \int_{\mathbb{R}}\int_{\mathbb{R}^3} \psi \ \varphi \ \varrho \nabla_x \Phi \cdot \nabla_x (-\Delta_x)^{-1} \big[ b(\varrho)  \big] \ \textup{d}x\textup{d}t.
	\end{align*}
	\begin{itemize}
		\item[(i)] Denoting with $q^*$ the Sobolev conjugate of $q$, given by
		\begin{equation*}
		\frac{1}{q^*}= \frac{1}{q}-\frac{1}{3},
		\end{equation*}
		the combination of Sobolev's inequality with the boundedness of the operator $\nabla_x (-\Delta_x)^{-1} \nabla_x$ form $L^p(\mathbb{R}^3)$ onto $L^p(\mathbb{R}^3; \mathbb{R}^{3\times 3})$ for any $1<p<\infty$, cf.  \eqref{boundedness singular operator}, provides 
		\begin{align*}
		\left\|  \nabla_x (-\Delta_x)^{-1} \big[ b(\varrho) (t,\cdot)\big] \right\|_{L^{q_1^*}(\mathbb{R}^3; \mathbb{R}^3)} &\lesssim \left\| \big(\nabla_x (-\Delta_x)^{-1} \nabla_x\big) \big[ b(\varrho) (t,\cdot)\big] \right\|_{L^{q_1}(\mathbb{R}^3; \mathbb{R}^{3\times 3})} \\[0.1cm]
		&\lesssim \left\|  b(\varrho) (t,\cdot) \right\|_{L^{q_1}(\mathbb{R}^3)}= \left\|  \varrho (t,\cdot) \right\|_{L^{q_1\omega}(\Omega)}^{\omega}.
		\end{align*}
		Therefore, from H\"{o}lder's inequality we get 
		\begin{equation} \label{estimate I1}
		\begin{aligned}
		|I_1| &\leq c(\psi ')\int_{0}^{T}\ \| \sqrt{\varrho}(t,\cdot)\|_{L^{2p}(\Omega)} \| \sqrt{\varrho} \textbf{u}(t,\cdot) \|_{L^2(\Omega; \mathbb{R}^3)} \left\|  \nabla_x (-\Delta_x)^{-1} \big[ b(\varrho ) (t,\cdot)\big] \right\|_{L^{q_1^*}(\mathbb{R}^3; \mathbb{R}^3)} \textup{d}t \\
		&\leq c(\psi ') \int_{0}^{T} \ \| \varrho(t,\cdot)\|_{L^p(\Omega)}^{\frac{1}{2}}  \| \sqrt{\varrho} \textbf{u}(t,\cdot) \|_{L^2(\Omega; \mathbb{R}^3)} \left\|  \varrho (t,\cdot) \right\|_{L^{q_1\omega}(\Omega)}^{\omega} \textup{d}t, 
		\end{aligned}
		\end{equation}
		with 
		\begin{equation*} 
			\frac{1}{q_1} = \frac{5}{6}-\frac{1}{2p}.
		\end{equation*}
		As $\varrho \in C_{\textup{weak}}([0,T]; L^1 \cap L^{\gamma}(\Omega))$ and $1\leq q_1^*<\infty$, we must have
		\begin{equation*}
			\frac{6\gamma}{5\gamma -  3} \leq q_1 <3.
		\end{equation*}
		\item[(ii)] Similarly, form \eqref{boundedness singular operator} and H\"{o}lder's inequality we have
		\begin{align*}
		\left\| \big(\nabla_x (-\Delta_x)^{-1} \divv_x\big) \big[\big(b(\varrho) \textbf{u}\big) (t,\cdot) \big] \right\|_{L^{q_2}(\mathbb{R}^3; \mathbb{R}^3)} &\lesssim \left\|  \big(b(\varrho) \textbf{u}\big) (t,\cdot) \right\|_{L^{q_2}(\mathbb{R}^3; \mathbb{R}^3)} \\
		&\lesssim \| \textbf{u}(t,\cdot)\|_{L^6(\Omega; \mathbb{R}^3)} \|b(\varrho)(t,\cdot)\|_{L^{\frac{6q_2}{6-q_2}}(\mathbb{R}^3)} \\
		&\lesssim \left\|\nabla_x \textbf{u} (t,\cdot) \right\|_{L^2(\Omega; \mathbb{R}^{3\times 3})} \| \varrho(t,\cdot)\|_{L^{\frac{6q_2\omega}{6-q_2}}(\Omega)}^{\omega}.
		\end{align*}
		Therefore, we obtain
		\begin{equation} \label{estimate I2}
		\begin{aligned}
		|I_2| &\lesssim  \int_{0}^{T}  \| \varrho(t,\cdot)\|_{L^{p}(\Omega)} \| \textbf{u}(t,\cdot) \|_{L^6(\Omega; \mathbb{R}^3)} \left\| \big(\nabla_x (-\Delta_x)^{-1} \divv_x\big) \big[ \big(b(\varrho) \textbf{u}\big) (t,\cdot) \big] \right\|_{L^{q_2}(\mathbb{R}^3; \mathbb{R}^3)} \textup{d}t \\
		&\lesssim \int_{0}^{T}  \| \varrho(t,\cdot)\|_{L^{p}(\Omega)} \left\|\nabla_x \textbf{u}(t,\cdot)  \right\|_{L^2(\Omega; \mathbb{R}^{3\times 3})} \| \varrho(t,\cdot)\|_{L^{\frac{6q_2\omega}{6-q_2}}(\Omega)}^{\omega}  \textup{d}t,
		\end{aligned}
		\end{equation}
		with 
		\begin{equation*}
		\frac{1}{q_2}= \frac{5}{6}-\frac{1}{p}
		\end{equation*}
		satisfying
		\begin{equation*}
		\frac{6\gamma}{5\gamma-6} \leq q_2 < \infty.
		\end{equation*}
		\item[(iii)] Proceeding as in (i), we have
		\begin{align*}
		&\left\|  \nabla_x (-\Delta_x)^{-1} \big[ \big( b(\varrho)-\varrho b'(\varrho)\big)(t,\cdot) \divv_x \textbf{u} (t,\cdot) \big]  \right\|_{L^{q_3^*}(\mathbb{R}^3; \mathbb{R}^3)} \\[0.1cm]
		&\quad \quad  \quad \quad \lesssim \left\| \big(\nabla_x (-\Delta_x)^{-1}\nabla_x\big) \big[ \big( b(\varrho)-\varrho b'(\varrho)\big)(t,\cdot) \divv_x \textbf{u} (t,\cdot) \big]  \right\|_{L^{q_3}(\mathbb{R}^3; \mathbb{R}^{3\times 3})}\\[0.1cm]
		&\quad \quad \quad \quad \lesssim \left\| \big[b(\varrho)\divv_x \textbf{u} \big] (t,\cdot) \right\|_{L^{q_3}(\mathbb{R}^3)} \\[0.1cm]
		&\quad \quad \quad \quad \leq \left\|\nabla_x \textbf{u} (t,\cdot) \right\|_{L^2(\Omega; \mathbb{R}^{3\times 3})} \| b(\varrho)(t,\cdot)\|_{L^{\frac{2q_3}{2-q_3}}(\mathbb{R}^3)} \\[0.1cm]
		&\quad \quad  \quad \quad = \left\|\nabla_x \textbf{u} (t,\cdot) \right\|_{L^2(\Omega; \mathbb{R}^{3\times 3})} \| \varrho (t,\cdot)\|_{L^{\frac{2q_3\omega}{2-q_3}}(\Omega)}^{\omega},
		\end{align*}
		and
		\begin{equation} \label{estimate I3}
		\begin{aligned}
		|I_3| &\lesssim  \int_{0}^{T}  \| \varrho(t,\cdot)\|_{L^{p}(\Omega)} \| \textbf{u} (t,\cdot)\|_{L^6(\Omega; \mathbb{R}^3)} \left\|   \nabla_x (-\Delta_x)^{-1} \big[ \big( b(\varrho)-\varrho b'(\varrho)\big) \divv_x \textbf{u} (t,\cdot) \big]  \right\|_{L^{q_3^*}(\mathbb{R}^3; \mathbb{R}^3)} \textup{d}t \\
		&\lesssim \int_{0}^{T}  \| \varrho(t,\cdot)\|_{L^{p}(\Omega)} \left\|\nabla_x \textbf{u}(t,\cdot)  \right\|_{L^2(\Omega; \mathbb{R}^{3\times 3})} \| \varrho(t,\cdot)\|_{L^{\frac{2q_3\omega}{2-q_3}}(\Omega)}^{\omega}  \textup{d}t,
		\end{aligned}
		\end{equation}
		with 
		\begin{equation*}
			\frac{1}{q_3}= \frac{7}{6} - \frac{1}{p} 
		\end{equation*}
		satisfying
		\begin{equation*}
			\frac{6\gamma}{7\gamma -6} \leq q_3 < \infty.
		\end{equation*}
		\item[(iv)] From the fact that
			\begin{align*}
				\varrho \textbf{u} &\in L^2(0,T; L^{\frac{6\gamma}{6+\gamma}}(\Omega; \mathbb{R}^3)), \\
				\textbf{u} &\in L^2(0,T; L^6(\Omega; \mathbb{R}^3)),
			\end{align*}
			we can deduce
			\begin{equation*}
				\varrho \textbf{u} \otimes \textbf{u} \in L^1(0,T; L^q(\Omega; \mathbb{R}^{3\times 3})) \quad \mbox{with } q=\frac{3\gamma}{3+\gamma}.
			\end{equation*}
			Therefore, proceeding as in (i) we have
			\begin{equation} \label{estimate I4}
				\begin{aligned}
					|I_4| &\leq c(\nabla_x \varphi)\int_{0}^{T} \| (\varrho \textbf{u} \otimes \textbf{u})(t,\cdot) \|_{L^{\frac{3\gamma}{3+\gamma}}(\Omega; \mathbb{R}^{3\times 3})} \left\| \nabla_x (-\Delta_x)^{-1} \big[ b(\varrho) (t,\cdot)\big] \right\|_{L^{q_4^*}(\mathbb{R}^3; \mathbb{R}^{3} )} \textup{d}t \\
					& \leq c(\nabla_x \varphi) \int_{0}^{T} \| (\varrho \textbf{u} \otimes \textbf{u})(t,\cdot) \|_{L^{\frac{3\gamma}{3+\gamma}}(\Omega; \mathbb{R}^{3\times 3})} \left\| \varrho (t,\cdot) \right\|_{L^{q_4\omega}(\Omega )}^{\omega}\textup{d}t,
				\end{aligned}
			\end{equation}
			with 
			\begin{equation*}
				\frac{1}{q_4}= 1 -\frac{1}{\gamma}.
			\end{equation*}
			\item[(v)] Similarly, we have 
			\begin{equation} \label{estimate I5}
			\begin{aligned}
			|I_5| &\lesssim \int_{0}^{T} \| (\varrho \textbf{u} \otimes \textbf{u})(t,\cdot) \|_{L^{\frac{3\gamma}{3+\gamma}}(\Omega; \mathbb{R}^{3\times 3})} \left\| \big(\nabla_x (-\Delta_x)^{-1}\nabla_x\big) \big[ b(\varrho) (t,\cdot)\big] \right\|_{L^{q_4^*}(\mathbb{R}^3; \mathbb{R}^{3\times 3} )} \textup{d}t \\
			& \lesssim \int_{0}^{T} \| (\varrho \textbf{u} \otimes \textbf{u})(t,\cdot) \|_{L^{\frac{3\gamma}{3+\gamma}}(\Omega; \mathbb{R}^{3\times 3})} \left\| \varrho (t,\cdot) \right\|_{L^{q_4^*\omega}(\Omega; \mathbb{R}^{3\times 3} )}^{\omega}\textup{d}t.
			\end{aligned}
			\end{equation}
			\item[(vi)] Noticing that 
			\begin{equation*}
				\supp (\nabla_x \varphi) \subset \Omega_R := \Omega \cap B_R,
			\end{equation*}
			and using the Sobolev embedding 
			\begin{equation*}
				W^{1,q_5}(\Omega_R) \hookrightarrow L^{\infty}(\Omega_R) \quad \mbox{with } q_5 >3
			\end{equation*} 
			to deduce 
			\begin{align*}
				\left\| \nabla_x (-\Delta_x)^{-1} \big[ b(\varrho) (t,\cdot)\big] \right\|_{L^{\infty}(\Omega_R; \mathbb{R}^{3})} &\lesssim \left\| \big(\nabla_x (-\Delta_x)^{-1} \nabla_x\big)\big[ b(\varrho) (t,\cdot)\big] \right\|_{L^{q_5}(\Omega_R; \mathbb{R}^{3\times 3})} \\
				& \leq \left\| \big(\nabla_x (-\Delta_x)^{-1} \nabla_x\big)\big[ b(\varrho) (t,\cdot)\big] \right\|_{L^{q_5}(\mathbb{R}^3; \mathbb{R}^{3\times 3})} \\
				& \leq \left\| \varrho (t,\cdot) \right\|_{L^{q_5\omega}(\Omega )}^{\omega},
			\end{align*}
			from H\"{o}lder's inequality we have
			\begin{equation} \label{estimate I6}
				\begin{aligned}
					|I_6| &\leq c(\nabla_x \varphi) \int_{0}^{T} \| p(\varrho)(t,\cdot) \|_{L^1(\Omega)} \left\| \nabla_x (-\Delta_x)^{-1} \big[ b(\varrho) (t,\cdot)\big] \right\|_{L^{\infty}(\Omega_R; \mathbb{R}^{3} )} \textup{d}t \\
					&\leq c(\nabla_x \varphi) \int_{0}^{T} \| p(\varrho)(t,\cdot) \|_{L^1(\Omega)} \left\| \varrho (t,\cdot) \right\|_{L^{q_5\omega}(\Omega )}^{\omega} \textup{d}t.
				\end{aligned}
			\end{equation}
			\item[(vii)]  From H\"{o}lder's inequality we have
			\begin{equation} \label{estimate I7}
				\begin{aligned}
					|I_7| &\leq c(\nabla_x \varphi) \int_{0}^{T} \| \mathbb{S}(\nabla_x \textbf{u}) (t,\cdot) \|_{L^2(\Omega; \mathbb{R}^{3\times 3 })} \left\|  \nabla_x (-\Delta_x)^{-1} \big[ b(\varrho) (t,\cdot)\big] \right\|_{L^2(\mathbb{R}^3; \mathbb{R}^3 )} \textup{d}t \\
					&\leq c(\nabla_x \varphi) \int_{0}^{T} \| \mathbb{S}(\nabla_x \textbf{u}) (t,\cdot) \|_{L^2(\Omega; \mathbb{R}^{3\times 3 })} \left\| \varrho  (t,\cdot) \right\|_{L^{\frac{6}{5}\omega}(\Omega)}^{\omega} \textup{d}t.
				\end{aligned}
			\end{equation}
			\item[(viii)] Similarly,
			\begin{equation} \label{estimate I8}
			\begin{aligned}
				|I_8| &\lesssim \int_{0}^{T} \| \mathbb{S}(\nabla_x \textbf{u}) (t,\cdot) \|_{L^2(\Omega; \mathbb{R}^{3\times 3 })} \left\| \big(\nabla_x (-\Delta_x)^{-1} \nabla_x\big) \big[ b(\varrho) (t,\cdot)\big] \right\|_{L^2(\mathbb{R}^3; \mathbb{R}^{3\times 3} )} \textup{d}t \\
				&\leq \int_{0}^{T} \| \mathbb{S}(\nabla_x \textbf{u}) (t,\cdot) \|_{L^2(\Omega; \mathbb{R}^{3\times 3 })} \left\| \varrho  (t,\cdot) \right\|_{L^{2\omega}(\Omega)}^{\omega} \textup{d}t.
			\end{aligned}
			\end{equation}
			\item[(ix)] From Sobolev's inequality and the boundedness of the operator $\nabla_x (-\Delta_x)^{-1} \nabla_x$ from $L^p(\Omega)$ onto $L^p(\Omega; \mathbb{R}^{3\times 3})$ for any $1<p<\infty$, cf. \eqref{boundedness singular operator}, we have
			\begin{align*}
				\| \nabla_x (-\Delta_x)^{-1} (\varrho+g)(t,\cdot) \|_{L^{\gamma^*}(\Omega; \mathbb{R}^3)} &\lesssim \| \nabla_x (-\Delta_x)^{-1} \nabla_x (\varrho+g)(t,\cdot) \|_{L^{\gamma}(\Omega; \mathbb{R}^{3\times 3})} \\
				& \lesssim \| (\varrho+g)(t,\cdot) \|_{L^{\gamma}(\Omega)},
			\end{align*}
			Therefore, from H\"{o}lder's inequality we get 
			\begin{equation} \label{estimate I9}
				\begin{aligned}
				|I_9| &\lesssim \int_{0}^{T} \| 	\varrho(t,\cdot)\|_{L^{p}(\Omega)} \| \nabla_x (-\Delta_x)^{-1} (\varrho+g)(t,\cdot) \|_{L^{\gamma^*}(\Omega; \mathbb{R}^3)}  \left\|  \nabla_x (-\Delta_x)^{-1} \big[ b(\varrho) (t,\cdot)\big] \right\|_{L^{q_5^*}(\mathbb{R}^3; \mathbb{R}^3)} \textup{d}t \\
				&\lesssim \int_{0}^{T}  \| 	\varrho(t,\cdot)\|_{L^p(\Omega)} \| (\varrho+g)(t,\cdot) \|_{L^{\gamma}(\Omega)} \left\|  \varrho (t,\cdot) \right\|_{L^{q_5\omega}(\Omega)}^{\omega} \textup{d}t,
				\end{aligned}
			\end{equation}
			with 
			\begin{equation*} 
			\frac{1}{q_5} = \frac{5}{3}-\frac{1}{\gamma}-\frac{1}{p},
			\end{equation*}
			satisfying
			\begin{equation*}
			\frac{3\gamma}{5\gamma-6} \leq q_5  \leq \frac{3\gamma}{2\gamma-3}
			\end{equation*}
	\end{itemize}
	
	At this point, in order to guarantee boundedness of the  integrals \eqref{estimate I1}--\eqref{estimate I9} we must require $\alpha \leq \gamma$ in every norm of the type
	\begin{equation*}
	\| \varrho(t,\cdot)\|_{L^{\alpha }(\Omega)},
	\end{equation*}
	and thus
	\begin{itemize}
		\item[-] from \eqref{estimate I1}, we recover: $0<\omega \leq \frac{5}{6} \gamma -\frac{1}{2}$;
		\item[-] from \eqref{estimate I2}, \eqref{estimate I3}, \eqref{estimate I5}, we recover: $0 <\omega \leq \frac{2}{3} \gamma-1$;
		\item[-] from \eqref{estimate I4}, we recover: $0 <\omega \leq \gamma-1$;
		\item[-] from \eqref{estimate I6}, \eqref{estimate I7}, \eqref{estimate I8}, we recover: $0<\omega \leq \min \left\{ \frac{\gamma}{3}, \frac{\gamma}{2}, \frac{5}{6}\gamma \right\} = \frac{\gamma}{3}$;
		\item[-] from \eqref{estimate I9}, we recover: $0< \omega \leq \frac{5}{3} \gamma -2$.
	\end{itemize}
	Since 
	\begin{equation*}
	\frac{2}{3} \gamma-1 \leq \min \left\{ \frac{5}{6} \gamma -\frac{1}{2}, \ \gamma-1, \ \frac{\gamma}{3}, \ \frac{5}{3} \gamma -2\right\}, 
	\end{equation*}
	whenever $1\leq \gamma \leq 3$, it is enough to consider \eqref{condition on omega}. Summing the estimates \eqref{estimate I1}--\eqref{estimate I9}, we get
	\begin{equation*}
	\int_{0}^{T} \int_{\Omega} \psi \ \varphi \ \varrho^{\gamma+\omega} \ \textup{d}x\textup{d}t \lesssim 1,
	\end{equation*}
	and therefore, it is enough to let $\psi, \ \varphi \rightarrow 1$ to obtain \eqref{pressure estimate up to the boundary}.
\end{proof}

\begin{remark}
	Instead of the singular operator $\nabla_x (-\Delta_x)^{-1}$, to prove Theorem \ref{pressure estimate} we could have used the Bogovskii operator $\mathfrak{B}$, which can be interpreted as the inverse of $\divv_x$. More precisely, if we consider the equation
	\begin{equation*}
		\divv_x \textbf{v} =f 
	\end{equation*}
	it has been proved that it admits a solution operator $\mathfrak{B}: f \mapsto \textbf{v}$, bounded from $L^p(\Omega)$ onto $D_0^{1,p}(\Omega; \mathbb{R}^3)$ for any $1<p<\infty$ and any locally Lipschitz exterior domain $\Omega \subset \mathbb{R}^3$, see Galdi \cite{Gal}, Theorem III.3.6. However, in order to get an analogous of estimate \eqref{estimate I2}, we would have needed an additional requirement that if $f= \divv_x \textbf{g}$ for some $\textbf{g}\in L^r(\Omega; \mathbb{R}^3)$, with $\textbf{g}\cdot \textbf{n}|_{\partial \Omega}=0$, then
	\begin{equation*}
	\| \mathfrak{B}[f] \|_{L^r(\Omega; \mathbb{R}^3)} \leq c(p,r,\Omega) \| \textbf{g} \|_{L^r(\Omega; \mathbb{R}^3)};
	\end{equation*}
	this result is known to be true for bounded domains (see, for instance, \cite{Gal}, Theorem III.3.4) but not for unbounded domains, to the best of the author's knowledge.
\end{remark}

Theorem \ref{pressure estimate} in particular implies that 
\begin{equation} \label{better estimate density}
\varrho \in L^{\gamma + \omega} ((0,T) \times \Omega)  \quad \mbox{with} \quad \omega= \frac{2}{3}\gamma-1,
\end{equation}
and therefore the range of $\gamma$ in \eqref{condition gamma} can be slightly improved. More precisely, we have the following last result.

\begin{corollary} \label{corollary}
	There exists $\gamma^* < \frac{7}{4}$ such that Theorem \ref{weak-strong uniqueness} holds with condition \eqref{condition gamma} replaced by 
	\begin{equation*}
	\gamma^* \leq \gamma \leq 2.
	\end{equation*} 
\end{corollary}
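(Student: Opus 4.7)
The only step in the proof of Theorem \ref{weak-strong uniqueness} that forced $\gamma \geq 9/5$ was the control of the high-density portion of the gravitational contribution,
\begin{equation*}
\int_{\Omega} \varrho \, \bigl| \nabla_x (-\Delta_x)^{-1}\bigl(\mathbbm{1}_{\varrho \geq \bar{\varrho}}(\varrho - \widetilde{\varrho})\bigr) \bigr|^2 \, \textup{d}x,
\end{equation*}
whereas the low-density piece, relying on $\|\varrho\|_{L^{3/2}(\Omega)}$, already works for every $\gamma \geq 3/2$, and the hydrodynamic terms borrowed from \cite{FeiNov} are only sensitive to the upper bound $\gamma \leq 2$. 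My plan is to rerun the proof of Theorem \ref{weak-strong uniqueness} verbatim, changing only this estimate by exploiting the new space-time integrability $\varrho \in L^{\gamma+\omega}((0,T)\times\Omega)$ with $\omega = \frac{2}{3}\gamma - 1$ provided by Theorem \ref{pressure estimate}.

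In the original argument the pairing $\|\varrho\|_{L^{q}(\Omega)} \cdot \|\nabla_x(-\Delta_x)^{-1}(\cdots)\|_{L^{\gamma^*}(\Omega)}^{2}$ with $q = 3\gamma/(5\gamma-6)$ required $\|\varrho(t,\cdot)\|_{L^{q}(\Omega)}$ to be bounded uniformly in $t$ by the energy, forcing $q \leq \gamma$ and hence $\gamma \geq 9/5$. The key idea is to replace the uniform-in-time bound by a merely time-integrable one: I would apply H\"{o}lder's inequality with the new choice $q = \gamma + \omega = (5\gamma-3)/3$ and invoke Lemma \ref{estimate gradient potential} to arrive at
\begin{equation*}
\int_{\Omega} \varrho \, \bigl| \nabla_x (-\Delta_x)^{-1}(\mathbbm{1}_{\varrho \geq \bar{\varrho}}(\varrho - \widetilde{\varrho})) \bigr|^2 \, \textup{d}x \lesssim \|\varrho(t,\cdot)\|_{L^{\gamma+\omega}(\Omega)} \, \|\mathbbm{1}_{\varrho \geq \bar{\varrho}}(\varrho - \widetilde{\varrho})(t,\cdot)\|_{L^{p}(\Omega)}^{2},
\end{equation*}
where $p = 6q/(5q-3) = (30\gamma-18)/(25\gamma-24)$ is the Sobolev pre-image of $2q/(q-1)$. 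Requiring $p \leq \gamma$, so that the second factor can be estimated by $\|\varrho\|_{L^{\gamma}(\Omega)}^{2-\gamma}\|\mathbbm{1}_{\varrho \geq \bar{\varrho}}(\varrho-\widetilde{\varrho})\|_{L^{\gamma}(\Omega)}^{\gamma}$ and controlled by the relative energy via the convexity of $P$, reduces to the quadratic inequality $25\gamma^{2} - 54\gamma + 18 \geq 0$, whose larger root is $\gamma^{*} = (27 + 3\sqrt{31})/25$. A direct comparison ($144 \cdot 31 = 4464 < 4489 = 67^{2}$) confirms $\gamma^{*} < 7/4$.

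Integrating the pointwise bound in $t$, the gravitational contribution becomes
\begin{equation*}
\int_{0}^{\tau} \!\! \int_{\Omega} \varrho \, |\nabla_x(\Phi - \widetilde{\Phi})|^2 \, \textup{d}x\,\textup{d}t \lesssim \int_{0}^{\tau} \bigl(1 + \|\varrho(t,\cdot)\|_{L^{\gamma+\omega}(\Omega)}\bigr)\, \mathcal{E}(\varrho,\textbf{u}\,|\,\widetilde{\varrho},\widetilde{\textbf{u}})(t) \, \textup{d}t,
\end{equation*}
and by Theorem \ref{pressure estimate} together with $T < \infty$, the weight $t \mapsto \|\varrho(t,\cdot)\|_{L^{\gamma+\omega}(\Omega)}$ lies in $L^{\gamma+\omega}(0,T) \subset L^{1}(0,T)$. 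Feeding this back into \eqref{energy inequality 2} and applying a weighted Gronwall lemma then yields $\mathcal{E} \equiv 0$ on $[0,T]$ exactly as in Theorem \ref{weak-strong uniqueness}, hence \eqref{equality weak and strong}. I expect the principal technical point to be the identification of the optimal exponent $q = \gamma+\omega$ and the verification that, for this choice, both the Sobolev condition $p \leq \gamma$ and the time-integrability of the weight hold simultaneously on an interval $[\gamma^{*},2]$ with $\gamma^{*} < 7/4$; aside from this, the remainder of the proof is a direct reuse of the argument of Theorem \ref{weak-strong uniqueness}.
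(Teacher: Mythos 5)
Your proposal is correct and lands exactly on the paper's constant $\gamma^{*} = (27+3\sqrt{31})/25$; the mechanism is the same, namely using the space--time integrability $\varrho \in L^{\gamma+\omega}((0,T)\times\Omega)$ from Theorem \ref{pressure estimate} to weaken the exponent constraint in the H\"{o}lder step for the high-density gravitational term. The only cosmetic difference is the assignment of H\"{o}lder exponents: the paper keeps the prefactor exponent $q = 3\gamma/(5\gamma-6)$ fixed (so that the Sobolev pre-image of the gradient sits exactly in $L^{\gamma}$) and relaxes the requirement $q \leq \gamma$ to $q \leq \gamma+\omega$, whereas you fix $q = \gamma+\omega$ and let the pre-image exponent $p = 6q/(5q-3)$ float subject to $p \leq \gamma$. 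Because the two are H\"{o}lder-dual parametrizations of the same one-parameter family of pairings, they yield the identical quadratic $25\gamma^{2}-54\gamma+18\geq 0$, hence the same $\gamma^{*}$. You also spell out the time-integrability of the Gronwall weight $t\mapsto\|\varrho(t,\cdot)\|_{L^{\gamma+\omega}(\Omega)}$, a point the paper leaves implicit, and correctly note that for $p<\gamma$ strictly one uses the finite measure of $\{\varrho\geq\bar\varrho\}$ (controlled by $\|\varrho\|_{L^{1}}/\bar\varrho$) to pass from $L^{p}$ to $L^{\gamma}$ on that set before invoking the strict convexity of $P$.
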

\begin{proof}
	Due to condition \eqref{better estimate density}, we only have to replace \eqref{condition on the exponent} in the proof of Theorem \eqref{weak-strong uniqueness} with
	\begin{equation*}
	1 \leq \frac{3\gamma}{5\gamma-6} \leq \gamma+ \omega= \frac{5}{3} \gamma -1,
	\end{equation*}
	which provides
	\begin{equation} \label{new condition gamma}
	\gamma^* \leq \gamma \leq 3, \quad \mbox{with } \gamma^*= \frac{27}{25}+ \frac{3 \sqrt{31}}{25}< \frac{7}{4}.
	\end{equation}
	Unifying conditions \eqref{new condition gamma} and \eqref{condition gamma 1}, we get the claim.
\end{proof}

\bigskip

\centerline{\bf Acknowledgement}

The author wishes to thank Prof. Eduard Feireisl for the helpful advice and discussions.

\end{document}